\numberwithin{equation}{section}
\theoremstyle{plain}
\newtheorem{rem}{Remark}
\newtheorem{theorem}{Theorem}[section]
\newtheorem{corollary}[theorem]{Corollary}
\newtheorem{proposition}[theorem]{Proposition}
\theoremstyle{definition}
\newtheorem{definition}[theorem]{Definition}
\theoremstyle{remark}
\newtheorem{case[theorem]}{Case}
\def\P{{\mathcal{P_{\textrm{aff}}}}}
\def\v{{\vartheta}}
\def \R{{\mathbb R}}
\def \F{{\mathcal F}}
\def\U{{\mathcal U}}
\def\norm#1.#2.{\lVert#1\rVert_{#2}}
\def\R{\mathbb R}
\title[Weyl Transform on Nonunimodular Groups]
{ Weyl Transform on Nonunimodular Groups
}
\author{Santosh Kumar Nayak}\address{Santosh Kumar Nayak  \endgraf School of Mathematical Sciences	\endgraf National Institute of Science Education and Research\endgraf Bhubaneswar - 752 050, India.} \email{nayaksantosh212@gmail.com, mathnayak@gmail.com}
\keywords{Representation theory, Nonunimodular group, Fourier Transform, Affine group, Similitude group, Affine Poincar\'e group, Weyl transform} \subjclass{Primary 47G10, 47G30,43A15, 43A30, Secondary 42B35}
\date{\today}
\begin{document}
	\thanks{S. K. Nayak is supported by the Institute Fellowship.}
	
	\maketitle

	\allowdisplaybreaks
	
	\begin{abstract}
        For $p>2$, B. Simon\cite{Simon} studied the unboundedness of the Weyl transform  for symbol belonging to $L^p({\R^n\times \R^n})$.
		In this article, we study the analog of   unboundedness of the Weyl transform on some nonunimodular groups, namely, the affine group, similitude group, and affine Poincar\'e group.
	\end{abstract}

	
	\section{Introduction}

Weyl transform is an operator introduced by H. Weyl in 1950, \cite{Weyl}. It is a type of self-adjoint pseudo-differential operator on $\R^n$, \cite{stien}. H. Weyl studied this operator while solving quantization problems in quantum mechanics, \cite{Kohn}. Physicists always love to work with self-adjoint operators. The theory of the Weyl transforms covers a broad area of great interest in both mathematical analysis and physics. In a number of problems like elliptic theory, regularity problems, spectral asymptotic, etc., Weyl transforms have proved to be a useful tool, \cite{wong1}. The Weyl transform(operator) has been deeply investigated mainly in the case where the symbol is a smooth function on $\R^n\times\R^n$ belonging to some symbol classes. For more details, we refer to \cite{Boggiatto1,Boggiatto2,Toft1, Toft2}. The Weyl transform is an operator acting on $L^2(\R^n)$ that belongs to Hilbert-Schmidt class when the symbol lies in $L^2(\R^n\times\R^n)$, \cite{Weyl}. And when the symbol $\sigma $ lies in $L^p(\R^n\times\R^n), 1\leq p\leq 2,$ the Weyl transform is bounded on $L^2(\R^n)$. For $p>2$, B. Simon\cite{Simon} studied the unboundedness of the Weyl transform  for symbol belonging to $L^p({\R^n\times \R^n})$. The proof of boundedness and unboundedness of the Weyl transform can be found in \cite{wong1}.

 In this paper, we assume group G to be any of the three groups, namely, affine group, similitude group, and affine Poincar\'e group, and their dual $\widehat{G}$ are discrete sets.

 Let $L^p(G\times\widehat{G},S_p)$ be the set of all $S_p$ valued functions such that 
$$\|f\|_{p,\mu}^p=\sum_{\rho\in\widehat{G}}\int_{G}\|f(x,\rho)K_{\rho}^{\frac{1}{p}}\|_{S_p}^pd\mu(x)<\infty, \quad 1\leq p<\infty,$$
$$\|f\|_{\infty,\mu}=\text{ess~~sup}_{(x,\rho)\in G\times\widehat{G}}\|f(x,\rho)\|_{S_{\infty}}<\infty.$$

\begin{definition}
Let $f,g\in C_c(G)$. Then the Wigner transform associated to $f$ and $g$ is given by 
$$W(f,g)(x,\rho)=\int_G f(x^{\prime})\tau_{x^{\prime}}g(x)\rho(x^{\prime})d\mu(x^{\prime}),$$ where $\tau_{x}f(y)=f(x^{-1}y)$ is the left translation operator on $L^p(G)$.
\end{definition}
 Proof of the  following results is followed step by step from \cite{Ghosh}. So we omit those proofs from the article but to make it self-contained, we just have stated.
\begin{theorem}
Let $f,g\in C_c(G)$. Then for $2\leq p\leq\infty$, we have $W(f,g)\in L^p(G\times\widehat{G},S_p)$ and 
$$\|W(f,g)\|_{p,\mu}\leq \|f\|_{L^2(G)}\|g\|_{L^2(G)}.$$
\end{theorem}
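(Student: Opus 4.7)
My approach will be to establish the two endpoint cases $p=2$ and $p=\infty$ directly and then conclude by complex interpolation. The key preliminary observation driving both endpoints is that, for fixed $x\in G$, the slice function $h_x(x'):=f(x')g(x'^{-1}x)$ lies in $C_c(G)$, and the Wigner transform coincides with its group Fourier transform, $W(f,g)(x,\rho)=\widehat{h_x}(\rho)$.

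For the case $p=2$, I would invoke the Plancherel formula for the non-unimodular group $G$ (incorporating the Duflo--Moore operators $K_\rho$), which gives
$$\sum_{\rho\in\widehat{G}} \bigl\|W(f,g)(x,\rho)\,K_\rho^{1/2}\bigr\|_{S_2}^2 \;=\; \|h_x\|_{L^2(G)}^2 \;=\; \int_G |f(x')|^2\,|g(x'^{-1}x)|^2\, d\mu(x').$$
Integrating in $x$, applying Fubini, and using left-invariance of $\mu$ in the inner integral (to get $\int_G|g(x'^{-1}x)|^2\,d\mu(x)=\|g\|_{L^2}^2$) yields the clean identity $\|W(f,g)\|_{2,\mu}=\|f\|_{L^2}\|g\|_{L^2}$. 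For the case $p=\infty$, unitarity of $\rho(x')$ together with the Bochner triangle inequality gives the pointwise bound
$$\|W(f,g)(x,\rho)\|_{S_\infty} \;\leq\; \int_G |f(x')|\,|g(x'^{-1}x)|\, d\mu(x'),$$
and a Cauchy--Schwarz estimate, performed after the left-invariant substitution $x'=xy$, bounds the right-hand side uniformly in $(x,\rho)$ by $\|f\|_{L^2}\|g\|_{L^2}$.

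With both endpoints in hand, I would apply Riesz--Thorin (or Stein's interpolation theorem for operator-valued analytic families) to the bilinear map $(f,g)\mapsto W(f,g)$ between $L^2(G\times\widehat{G},S_2)$ and $L^\infty(G\times\widehat{G},S_\infty)$; since $\{S_p\}$ is a standard complex interpolation scale, the weight $K_\rho^{1/p}$ appears as the natural interpolant between $K_\rho^{1/2}$ at $p=2$ and the identity at $p=\infty$, producing the claimed bound for all $2\leq p\leq\infty$.

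The principal obstacle I anticipate is the correct accounting of the Duflo--Moore operators $K_\rho$ along the interpolation line and, relatedly, the modular function of $G$ at the $p=\infty$ endpoint, since the $L^\infty$ norm carries no $K_\rho$ weight while the $L^2$ norm does. On the three specific groups under consideration, however, the dual $\widehat{G}$ is discrete and the $K_\rho$ are explicit, so once the endpoint computations are performed with care for the Haar measure conventions the interpolation step is expected to proceed verbatim as in \cite{Ghosh}.
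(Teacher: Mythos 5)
Your overall route --- Plancherel at $p=2$, a pointwise operator-norm bound at $p=\infty$, and complex interpolation of the weighted Schatten scales --- is exactly the argument the paper intends: the paper omits the proof and defers to \cite{Ghosh}, whose proof is precisely this two-endpoint interpolation. Your $p=2$ endpoint is correct and in fact an equality, because there the inner integral $\int_G|g(x'^{-1}x)|^2\,d\mu(x)$ is taken in the variable $x$, where $x\mapsto x'^{-1}x$ is a left translation and left-invariance applies cleanly.

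The gap is at the $p=\infty$ endpoint, and it is exactly the obstacle you flag and then set aside. In the bound $\|W(f,g)(x,\rho)\|_{S_\infty}\le\int_G|f(x')|\,|g(x'^{-1}x)|\,d\mu(x')$ the integration variable is $x'$ with $x$ fixed; after your substitution $x'=xy$ the second Cauchy--Schwarz factor is $\bigl(\int_G|g(y^{-1})|^2\,d\mu(y)\bigr)^{1/2}=\bigl(\int_G|g(y)|^2\Delta(y)^{-1}\,d\mu(y)\bigr)^{1/2}$, which is the $L^2$ norm of $g$ against the \emph{right} Haar measure, not $\|g\|_{L^2(G)}$. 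On the groups at hand the discrepancy is unbounded (for the affine group $\Delta(b,a)^{-1}=a$), so this is not a cosmetic issue, and it cannot be repaired ``verbatim as in \cite{Ghosh}'' because the motion groups there are unimodular, so $\Delta\equiv1$ and the problem never arises. For $f,g\in C_c(G)$ the extra factor is finite, so the qualitative conclusion $W(f,g)\in L^p(G\times\widehat{G},S_p)$ survives interpolation, but the stated constant $\|f\|_{L^2(G)}\|g\|_{L^2(G)}$ does not follow from your argument: you would either need to prove the $S_\infty$ bound by a sharper mechanism than the triangle inequality plus Cauchy--Schwarz, or accept a constant involving $\sup_{y\in\supp g}\Delta(y)^{-1/2}$ (equivalently, restate the estimate with $\|\Delta^{-1/2}g\|_{L^2(G)}$ in place of $\|g\|_{L^2(G)}$). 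The same caveat applies to the theorem as printed, since the paper's cited source does not cover the nonunimodular case.
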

The following proposition represents the Fourier transform in terms of the Wigner transform.
\begin{proposition}\label{wigner}
Let $f\in L^1(G)\cap L^2(G)$ and $C=\int\limits_{G}g(y)d\mu(y)\neq 0$. Then $$\widehat{f}(\rho)=C^{-1}\int_{G}W(f,g)(y,\rho)d\mu(y),$$ for all $\rho\in\widehat{G}.$
\end{proposition}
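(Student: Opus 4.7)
The plan is to unwind the definition of the Wigner transform on the right-hand side, interchange the two integrals by an operator-valued Fubini theorem, and then exploit left-invariance of the Haar measure $\mu$ to reduce the resulting inner integral to the constant $C$.

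Concretely, I would substitute the definition of $W(f,g)$ to rewrite
$$C^{-1}\int_{G} W(f,g)(y,\rho)\,d\mu(y)=C^{-1}\int_{G}\int_{G} f(x')\,\tau_{x'}g(y)\,\rho(x')\,d\mu(x')\,d\mu(y).$$
To legitimize the interchange of the two integrations, I would bound the integrand in operator norm by $|f(x')|\,|g(x'^{-1}y)|$, using the fact that $\rho$ is a unitary representation and hence $\|\rho(x')\|_{S_\infty}=1$. The double integral of this scalar majorant equals $\|f\|_{L^1(G)}\|g\|_{L^1(G)}$, which is finite because $f\in L^1(G)$ and $g\in C_c(G)\subset L^1(G)$. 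This justifies Fubini's theorem in the Bochner sense.

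After swapping the order, I would pull $f(x')\rho(x')$ outside the inner integral to obtain
$$C^{-1}\int_{G} f(x')\,\rho(x')\left[\int_{G} g(x'^{-1}y)\,d\mu(y)\right]d\mu(x').$$
By left-invariance of $\mu$ on $G$, the change of variable $z=x'^{-1}y$ gives $\int_{G} g(x'^{-1}y)\,d\mu(y)=\int_{G} g(z)\,d\mu(z)=C$, so the bracketed factor is a scalar that cancels with the prefactor $C^{-1}$. What remains is exactly $\int_{G} f(x')\,\rho(x')\,d\mu(x')=\widehat{f}(\rho)$, completing the identification.

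The only subtle step I expect is the careful handling of the operator-valued Fubini, but the uniform bound $\|\rho(x')\|_{S_\infty}=1$ immediately reduces this to the scalar case. It is worth emphasizing that the argument uses only the left-invariance of $\mu$ and never invokes unimodularity, so it applies uniformly to the affine group, the similitude group, and the affine Poincar\'e group considered in this paper.
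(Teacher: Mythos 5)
Your proof is correct: unwinding the definition of $W(f,g)$, justifying the interchange of integrals by the uniform bound $\|\rho(x')\|_{S_\infty}=1$, and using left-invariance of $\mu$ to evaluate the inner integral as $C$ is precisely the standard argument, and it matches the route of the reference \cite{Ghosh} to which the paper defers for this proposition (the paper itself omits the proof). The only point worth making explicit is that the Fubini step needs $g\in L^1(G)$ (e.g.\ $g\in C_c(G)$, as in the definition of the Wigner transform), a hypothesis the proposition's statement leaves implicit and which you correctly supply.
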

\begin{corollary}
Let $f\in L^1(G)\cap A(G)$ and $g\in L^1(G)\cap L^2(G)$ with $C=\int\limits_{G}g(x)d\mu(x)\neq 0$. Then 
$$f(x)=C^{-1}\sum_{\rho\in\widehat{G}}\operatorname{Tr}\left(\rho^{\ast}(x)\left(\int_G W(f,g)(x^{\prime},\rho)d\mu(x^{\prime})\right)K_{\rho}\right).$$
\end{corollary}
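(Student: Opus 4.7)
The plan is to deduce this corollary directly by combining the Fourier inversion formula on the nonunimodular group $G$ with the representation of $\widehat{f}(\rho)$ given in Proposition \ref{wigner}. Since $G$ has a discrete dual and comes equipped with a Duflo--Moore operator $K_\rho$ on the representation space of each $\rho\in\widehat{G}$, the Plancherel theory furnishes an inversion formula of the form
\[
f(x) \;=\; \sum_{\rho\in\widehat{G}}\operatorname{Tr}\bigl(\rho^{\ast}(x)\,\widehat{f}(\rho)\,K_{\rho}\bigr),
\]
valid pointwise for $f$ in the Fourier algebra $A(G)$. The hypothesis $f\in L^1(G)\cap A(G)$ is exactly what makes both sides meaningful: $f\in L^1(G)$ ensures $\widehat{f}(\rho)$ is a well-defined bounded operator, while $f\in A(G)$ ensures the spectral expansion on the right converges pointwise and represents $f$.

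First I would record the inversion formula above (appealing to the analogue of Ghosh \cite{Ghosh} for the affine, similitude and affine Poincar\'e groups, where the explicit form of $K_\rho$ is known). Next, since $f\in L^1(G)\cap L^2(G)$ (because $A(G)\subset L^2(G)$) and since $g\in L^1(G)\cap L^2(G)$ with $C\neq 0$, Proposition \ref{wigner} applies and yields
\[
\widehat{f}(\rho) \;=\; C^{-1}\int_{G} W(f,g)(x^{\prime},\rho)\,d\mu(x^{\prime}).
\]
Substituting this into the inversion formula and pulling the scalar $C^{-1}$ outside the trace and the sum gives the claimed identity.

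The only step that requires some care is the justification of plugging the operator-valued integral $\int_G W(f,g)(x^{\prime},\rho)\,d\mu(x^{\prime})$ inside the trace and summing against $\rho^{\ast}(x)K_\rho$. This is essentially a routine linearity/continuity argument, using that the trace is continuous on the trace class and that, by the preceding theorem with $p=2$, $W(f,g)(\,\cdot\,,\rho)K_\rho^{1/2}$ is Hilbert--Schmidt valued and integrable in an appropriate sense; hence the integral commutes with composition by the bounded operator $\rho^{\ast}(x)K_\rho^{1/2}$ and with the trace.

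I expect the main (mild) obstacle to be bookkeeping with the Duflo--Moore operator $K_\rho$: one must make sure that the Plancherel-inversion formula is stated with $K_\rho$ on the correct side (so that composition with $\rho^{\ast}(x)\,\widehat{f}(\rho)\,K_\rho$ is trace class) and that the substitution of the Wigner integral does not disturb this. Once this is set up, the corollary follows immediately from Proposition \ref{wigner} and Fourier inversion, with no further analytic input.
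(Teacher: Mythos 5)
Your argument is exactly the intended one: the paper omits this proof (deferring to \cite{Ghosh}), and the corollary is indeed an immediate substitution of the identity $\widehat{f}(\rho)=C^{-1}\int_G W(f,g)(x^{\prime},\rho)\,d\mu(x^{\prime})$ from Proposition \ref{wigner} into the Fourier inversion formula $f(x)=\sum_{\rho\in\widehat{G}}\operatorname{Tr}\bigl(\rho^{\ast}(x)\widehat{f}(\rho)K_{\rho}\bigr)$. The only slight imprecision is your claim that $A(G)\subset L^2(G)$; the cleaner route to the hypothesis of Proposition \ref{wigner} is $f\in L^1(G)\cap A(G)\subset L^1(G)\cap L^{\infty}(G)\subset L^1(G)\cap L^2(G)$, but this does not affect the argument.
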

\begin{definition}\label{Weyl}
Let $\sigma\in L^p(G\times \widehat{G}, S_p), 1\leq p\leq 2.$ Then the Weyl transform $W_{\sigma}:L^2(G)\to L^2(G)$ is defined by 
\begin{equation}\label{Weyl1}
\langle W_{\sigma}f,\overline{g}\rangle=\langle W(f,g),\sigma\rangle_{\mu}=\sum_{\rho\in\widehat{G}}\int_{G}\operatorname{Tr}\left(\sigma(x,\rho)^{\ast}W(f,g)(x,\rho)K_{\rho}\right)d\mu(x),
\end{equation}where $f,g$ are in $L^2(G)$ and $\langle,\rangle_{\mu}$ denotes innerproduct in $L^2(G\times\widehat{G},S_2)$.
\end{definition}
\begin{theorem}\label{bounded}
Let $\sigma\in L^p(G\times\widehat{G}, S_p), 1\leq p\leq 2$. Then $$\|W_{\sigma}\|\leq \|\sigma\|_{p,\mu}.$$
\end{theorem}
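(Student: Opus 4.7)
My plan is to prove the bound at the two endpoints $p=1$ and $p=2$ by direct estimation, and then obtain the intermediate range $1<p<2$ by complex interpolation on the scale $L^p(G\times\widehat G,S_p)$.

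For $p=2$, the bound is essentially Cauchy--Schwarz. By definition, $\langle W_\sigma f,\overline g\rangle=\langle W(f,g),\sigma\rangle_\mu$, and this pairing is an inner product on $L^2(G\times\widehat G,S_2)$. Hence
\[
|\langle W_\sigma f,\overline g\rangle|\le \|W(f,g)\|_{2,\mu}\,\|\sigma\|_{2,\mu}\le \|f\|_{L^2(G)}\|g\|_{L^2(G)}\|\sigma\|_{2,\mu},
\]
where the last inequality uses the Wigner bound (the previously stated theorem, with $p=2$). Taking the supremum over $g$ with $\|g\|_2\le 1$ gives $\|W_\sigma f\|_2\le \|\sigma\|_{2,\mu}\|f\|_2$.

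For $p=1$, I would use trace H\"older together with the $p=\infty$ endpoint of the Wigner estimate. By cyclicity of the trace and $S_1$--$S_\infty$ duality,
\[
|\operatorname{Tr}(\sigma(x,\rho)^{\ast}W(f,g)(x,\rho)K_\rho)|=|\operatorname{Tr}(K_\rho\sigma(x,\rho)^{\ast}W(f,g)(x,\rho))|\le \|\sigma(x,\rho)K_\rho\|_{S_1}\,\|W(f,g)(x,\rho)\|_{S_\infty},
\]
using that $K_\rho$ (the Duflo--Moore operator) is positive selfadjoint so $\|K_\rho\sigma^{\ast}\|_{S_1}=\|\sigma K_\rho\|_{S_1}$. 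Summing in $\rho$ and integrating in $x$, the first factor yields $\|\sigma\|_{1,\mu}$ and the second is dominated by $\|W(f,g)\|_{\infty,\mu}\le \|f\|_2\|g\|_2$ (again by the Wigner theorem at $p=\infty$). Thus $|\langle W_\sigma f,\overline g\rangle|\le \|\sigma\|_{1,\mu}\|f\|_2\|g\|_2$, giving the endpoint.

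For the intermediate range I would use complex interpolation. The natural map to interpolate is $\sigma\mapsto W_\sigma$, viewed as a sublinear map from $L^p(G\times\widehat G,S_p)$ to the bounded operators on $L^2(G)$. Since the weight $K_\rho^{1/p}$ built into the norm depends on $p$, the cleanest setup is to absorb it by the change of variable $\tilde\sigma(x,\rho):=\sigma(x,\rho)K_\rho^{1/p}$, so that $\|\sigma\|_{p,\mu}$ becomes the unweighted $L^p(G\times\widehat G,S_p)$ norm of $\tilde\sigma$; one then applies Riesz--Thorin (or Stein's interpolation for an analytic family $z\mapsto W_{\sigma_z}$ with $\sigma_z:=|\sigma|^{pz}\,\mathrm{sgn}\,\sigma\cdot K_\rho^{-z+(1-z)/2}$, indexed by $0\le\mathrm{Re}\,z\le 1$) between the $p=1$ and $p=2$ bounds just established.

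The main obstacle I anticipate is precisely the bookkeeping for the $K_\rho^{1/p}$ weights under interpolation: one must verify that the analytic family of operators is admissible in the sense of Stein, in particular that the operator-valued $K_\rho^{z}$ is holomorphic in the strip and that the pointwise $S_p$ estimates pass correctly to the Schatten norms of the mixed space. Everything else (the two endpoint inequalities) reduces to a one-line trace H\"older estimate combined with the already-stated pointwise Wigner bound.
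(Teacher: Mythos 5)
Your two endpoint estimates are correct and use exactly the right ingredients, but the proof is incomplete where you yourself flag it: the passage from $p\in\{1,2\}$ to $1<p<2$ by complex interpolation is not carried out, and the point you defer --- admissibility of the analytic family when the weight $K_\rho^{1/p}$ is an unbounded operator whose power depends on $p$, i.e.\ the identification of the complex interpolation space of the weighted mixed Schatten spaces $L^1(G\times\widehat G,S_1;K)$ and $L^2(G\times\widehat G,S_2;K^{1/2})$ with $L^p(G\times\widehat G,S_p;K^{1/p})$ --- is precisely the nontrivial content. As written, your family $\sigma_z=|\sigma|^{pz}\,\mathrm{sgn}\,\sigma\cdot K_\rho^{-z+(1-z)/2}$ also needs repair (for operator-valued $\sigma$ one must use the polar decomposition, and the exponents should be checked to return $\sigma$ itself at the interior point corresponding to $p$). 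So the intermediate range is a genuine gap in the proposal.

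The gap is easily closed, and in a way that makes the interpolation unnecessary: the Wigner estimate stated in the paper holds for the \emph{whole} range $2\le p'\le\infty$, not just the endpoints, so the trace--H\"older argument you run at $p=1$ works verbatim for every $1\le p\le 2$. Factor $K_\rho=K_\rho^{1/p}K_\rho^{1/p'}$ and use the Schatten H\"older inequality together with self-adjointness of $K_\rho^{1/p}$:
\begin{equation*}
\bigl|\operatorname{Tr}\bigl(\sigma(x,\rho)^{\ast}W(f,g)(x,\rho)K_{\rho}\bigr)\bigr|
=\bigl|\operatorname{Tr}\bigl(K_\rho^{1/p}\sigma(x,\rho)^{\ast}\,W(f,g)(x,\rho)K_\rho^{1/p'}\bigr)\bigr|
\le \|\sigma(x,\rho)K_\rho^{1/p}\|_{S_p}\,\|W(f,g)(x,\rho)K_\rho^{1/p'}\|_{S_{p'}},
\end{equation*}
then apply the scalar H\"older inequality over $\sum_{\rho}\int_G d\mu$ to get $|\langle W_\sigma f,\overline g\rangle|\le\|\sigma\|_{p,\mu}\|W(f,g)\|_{p',\mu}\le\|\sigma\|_{p,\mu}\|f\|_{L^2(G)}\|g\|_{L^2(G)}$, and take the supremum over $g$. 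This is the route the paper intends (it omits the proof and refers to Ghosh--Srivastav), and it is the same duality it later uses in the opposite direction in Proposition \ref{prop1}, where $\|W(f,g)\|_{p',\mu}$ is recovered as a supremum of $|\langle W(f,g),\sigma\rangle_\mu|$ over unit $\sigma$. In short: keep your $p=1$ computation, replace the pair $(S_1,S_\infty)$ by $(S_p,S_{p'})$, and drop the interpolation.
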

For $G=\R^n$, B. Simon \cite{Simon} gave the example of symbols in $L^p(\R^n\times\R^n),p>2$, such that the corresponding Weyl transform becomes unbounded. This idea has been motivating many authors to study the unboundedness of Weyl transform in other settings like the Heisenberg group \cite{Peng}, quaternion Heisenberg group \cite{Chen}, upper half plane \cite{Peng1}, motion groups \cite{Ghosh}, reduced Heisenberg group with multidimensional center \cite{San4}, and other Weyl transforms can be found in \cite{Zhao,Rachdi}. In this article, we study the unboundedness of Weyl transform on the affine group $U$, similitude group $\mathbb{SIM}(2)$, and affine Poincar\'e group $\P$.

The organization of the paper is as follows: we investigate the unboundedness of Weyl transform on the affine group, similitude group, and affine Poicar\'e group, respectively in Section \ref{s2}, Section \ref{s3}, and Section \ref{s4}. 
\section{Affine Group}\label{s2}
In this section, we consider the group $G$ to be an affine group $U$. Here we recall the Fourier analysis of the affine group, $U$, in detail. In this setup, we will define the Weyl transform formula and show that it cannot be extended as a bounded operator for the symbol in the corresponding $L^p$ spaces with $2<p<\infty$.

Let $U=\{(b,a):b\in\mathbb{R}, a>0 \}$ be the upper half plane. Then  with respect to the   binary operation `$\cdot$' on $U$,  defined as
\begin{equation}\label{Binop} 
	(b,a)\cdot (c,d)=(b+ac,ad),\quad (b,a), (c,d)\in U,
\end{equation} 
 the upper half plane $U$  forms  a non-abelian group. It can be shown  $(\frac{-b}{a},\frac{1}{a})$ is the inverse element of $(b,a)$ and $(0,1)$ is the identity element in $U$. The left and right Haar measures on $U$ are  $d\mu=\frac{dbda}{a^2}$ and $d\gamma = \frac{dbda}{a}$ respectively.
With respect to the above multiplication `$\cdot$' defined by \eqref{Binop}, U is also a locally compact and Hausdroff group on which the left Haar measure is different from the right Haar measure, and hence $U$ is a non-unimodular group.

Let $\mathcal{U}(L^2(\mathbb{R}_{\pm}))$  be the set of all unitary operators on $L^{2}(\mathbb{R}_{\pm})$. Then the unitary irreducible representations of $U$ on $L^2(\mathbb{R}_{\pm})$ are given by the mapping $\rho_{\pm}: U\rightarrow \mathcal{U}(L^{2}(\mathbb{R}_{\pm}))$ defined as

$$(\rho_{+}(b,a)u)(s)=a^{1/2}e^{-ibs}u(as), \quad s\in \mathbb{R}_{+}=[0,\infty),$$
for every $u\in L^{2}(\mathbb{R}_{+})$, and  
$$(\rho_{-}(b,a)u)(s)=a^{1/2}e^{-ibs}v(as),  \quad s\in \mathbb{R}_{-}=(-\infty,0],$$ 
for every $v\in L^{2}(\mathbb{R}_{-}).$  These two $\{\rho_+,\rho_-\}$ are only the irreducible inequvalent unitary representations, i.e., $\widehat{U}=\{\rho_+,\rho_-\}$.
 The unbounded operators, $K_{\pm}$, on $L^{2}(\mathbb{R}_{\pm}), $ are given by
\begin{equation}
	(K_{\pm}\phi)(s)=|s|\phi(s), \quad s\in \mathbb{R}_{\pm},
\end{equation} are known as Duflo-Moore operators, \cite{Fuhr}.

Let $f\in L^1(U)\cap L^2(U)$, the Fourier transform
$$\widehat{f}(\rho_{\pm})=\int_{U}f(b,a)\rho_{\pm}(b,a)\dfrac{dbda}{a^2}.$$ Let $\phi\in L^2(\R_{\pm})$, then $\widehat{f}(\rho_{\pm})\phi(s)=\int\limits_0^{\infty}\int\limits_{-\infty}^{\infty}f(b,a)\left( \rho_{\pm}(b,a)\phi\right)(s)\dfrac{dbda}{a^2},\phi\in L^2(\R_{\pm})$. Define $L^p$-Fourier transform $\F_p(f)\rho_{\pm}=\widehat{f}(\rho_{\pm})K_{\pm}^{\frac{1}{p^{\prime}}}, 1\leq p\leq \infty,\frac{1}{p}+\frac{1}{p^{\prime}}=1.$
\begin{theorem}[Fourier inversion formula]\cite{San1}
Let $f\in L^1(U)$. Then 
$$f(x)=\sum_{j\in\{\pm\}} \operatorname{Tr}\left(\rho_{j}(b,a)^{\ast}\widehat{f}(\rho_j)K_{j}\right).$$
\end{theorem}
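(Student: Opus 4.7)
The plan is to derive the inversion formula from the Plancherel theorem for the nonunimodular group $U$ in the Duflo-Moore formalism, and then recover the pointwise statement by polarization. Because $U$ is nonunimodular with modular function $\Delta(b,a)=1/a$ and the two representations $\rho_{\pm}$ are square-integrable, the classical Plancherel theorem of Duflo-Moore applies. The positive self-adjoint operators $K_{\pm}\phi(s)=|s|\phi(s)$ are precisely the semi-invariant operators satisfying the intertwining identity $\rho_{\pm}(b,a)^{\ast}K_{\pm}\rho_{\pm}(b,a)=\Delta(b,a)^{-1}K_{\pm}$, which is the compatibility condition needed so that the Fourier transform becomes an isometry once twisted by $K_{\pm}^{1/2}$.

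The steps I would carry out are, first, to verify the orthogonality relations for the coefficients of $\rho_{\pm}$ by direct computation: for $\phi,\psi\in L^{2}(\R_{\pm})$ with suitable integrability,
\begin{equation*}
\int_{U}\langle\rho_{\pm}(b,a)\phi,\psi\rangle\,\overline{\langle\rho_{\pm}(b,a)\phi',\psi'\rangle}\,\frac{db\,da}{a^{2}}=\langle K_{\pm}^{1/2}\phi,K_{\pm}^{1/2}\phi'\rangle\,\langle\psi',\psi\rangle,
\end{equation*}
which reduces, after carrying out the $b$-integral via Plancherel on $\R$ and changing variables $s\mapsto as$ in the remaining $a$-integral, to the factor $|s|\,ds$ that produces $K_{\pm}$. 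Second, rephrase this as $\|f\|_{L^{2}(U)}^{2}=\sum_{j\in\{\pm\}}\operatorname{Tr}\bigl(\widehat{f}(\rho_{j})^{\ast}\widehat{f}(\rho_{j})K_{j}\bigr)$ and polarize to obtain
\begin{equation*}
\langle f,g\rangle_{L^{2}(U)}=\sum_{j\in\{\pm\}}\operatorname{Tr}\bigl(\widehat{g}(\rho_{j})^{\ast}\widehat{f}(\rho_{j})K_{j}\bigr).
\end{equation*}
Third, specialize this to obtain pointwise inversion: testing against a Dirac-type approximation at the point $x=(b,a)$ (formally $\widehat{\delta_{x}}(\rho_{j})=\rho_{j}(x)$) yields the claimed identity
\begin{equation*}
f(x)=\sum_{j\in\{\pm\}}\operatorname{Tr}\bigl(\rho_{j}(x)^{\ast}\widehat{f}(\rho_{j})K_{j}\bigr).
\end{equation*}

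The principal obstacle is not the algebraic identity but the justification of the trace on the right-hand side: the operators $\widehat{f}(\rho_{\pm})K_{\pm}$ are not trace-class for a generic $f\in L^{1}(U)$, since $K_{\pm}$ is unbounded. One must therefore either restrict attention to a dense subalgebra of sufficiently regular functions (for instance, $f\in L^{1}(U)\cap A(U)$, or $f$ of the form $g\ast h^{\ast}$ with $g,h\in L^{2}(U)$) on which the trace makes sense, and then extend by density, or give meaning to the expression via the $L^{p}$-Fourier transform with $K_{\pm}^{1/p'}$ splitting the operator $K_{\pm}$ across $\widehat{f}(\rho_{\pm})$ and $\rho_{\pm}(x)^{\ast}$. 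Since the paper only cites this theorem from \cite{San1}, the technical handling of these domain issues is imported from there; the proof sketch above shows how inversion follows once the Duflo-Moore orthogonality relations are in place.
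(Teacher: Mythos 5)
The paper itself offers no proof of this theorem --- it is quoted verbatim from \cite{San1} --- so there is no internal argument to compare against. Your overall strategy (Duflo--Moore orthogonality relations $\Rightarrow$ Plancherel $\Rightarrow$ polarization $\Rightarrow$ approximate identity at $x$) is the standard route for square-integrable representations of nonunimodular groups and is essentially the right one. The difficulty is that both of the key identities your sketch rests on are stated with the wrong powers, and if one takes them at face value the chain of identities does not close.

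First, the semi-invariance relation: with $(\rho_{+}(b,a)u)(s)=a^{1/2}e^{-ibs}u(as)$ and $(K_{+}u)(s)=|s|u(s)$ a direct computation gives
$$\rho_{+}(b,a)^{\ast}K_{+}\rho_{+}(b,a)=a^{-1}K_{+}=\Delta(b,a)K_{+},$$
not $\Delta(b,a)^{-1}K_{+}$ as you wrote (in the paper's convention $\Delta(b,a)=1/a$). Second, and more consequentially, the orthogonality relation: performing the $b$-integral by Euclidean Plancherel and then substituting $t=as$ yields
$$\int_{U}\bigl|\langle\rho_{+}(b,a)\phi,\psi\rangle\bigr|^{2}\,\frac{db\,da}{a^{2}}
=2\pi\Bigl(\int_{0}^{\infty}\frac{|\phi(t)|^{2}}{t}\,dt\Bigr)\|\psi\|^{2}
=2\pi\,\|K_{+}^{-1/2}\phi\|^{2}\,\|\psi\|^{2},$$
so the Duflo--Moore operator appearing in the orthogonality relations is $K_{+}^{-1/2}$ (multiplication by $|s|^{-1/2}$), not $K_{+}^{1/2}$. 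The operator $K_{\pm}^{1/2}$ enters the Plancherel formula $\|f\|_{L^{2}(U)}^{2}=\sum_{j}\|\widehat{f}(\rho_{j})K_{j}^{1/2}\|_{S_{2}}^{2}$ precisely as the \emph{inverse} of that Duflo--Moore operator; your displayed formula conflates the two, and following it literally produces a false Plancherel identity and hence the wrong inversion formula. Both slips would be caught by actually carrying out the computation you defer, so the plan is repairable, but as written it is not correct. On the positive side, you are right that the bare hypothesis $f\in L^{1}(U)$ does not make $\widehat{f}(\rho_{\pm})K_{\pm}$ trace class, so the passage from the polarized Plancherel identity to the pointwise formula genuinely requires extra regularity (e.g.\ $f\in L^{1}(U)\cap A(U)$ or $\widehat{f}(\rho_{\pm})K_{\pm}\in S_{1}$), a hypothesis the paper's statement omits; identifying that gap in the statement itself is the strongest part of your write-up.
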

\begin{theorem}[Plancherel Formula]\cite{San1}
Let $f\in L^1(U)\cap L^2(U)$. Then 
$$\int_U |f(b,a)|^2\dfrac{dbda}{a^2}=\sum_{j\in\{\pm\}}\|\widehat{f}(\rho_{j})K_j^{\frac{1}{2}}\|_{S_2}^2.$$
\end{theorem}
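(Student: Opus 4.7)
The plan is to reduce the statement to the Euclidean Plancherel theorem in the translation variable $b$ by realizing each $\widehat{f}(\rho_{\pm})$ as an integral operator and computing its Hilbert--Schmidt norm after twisting by the Duflo--Moore operator. I would first restrict to $f\in C_c(U)$, since that subspace is dense in $L^{1}(U)\cap L^{2}(U)$, and then extend by a standard limiting argument.

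First, I would write the partial Fourier transform in the translation variable,
$$\tilde f(s,a)=\int_{-\infty}^{\infty} f(b,a)\,e^{-ibs}\,db,$$
so that for $\phi\in C_c(\R_{+})$ the action of $\widehat{f}(\rho_{+})$ on $\phi$ becomes
$$\widehat{f}(\rho_{+})\phi(s)=\int_{0}^{\infty} a^{-3/2}\,\tilde f(s,a)\,\phi(as)\,da.$$
After the substitution $t=as$ (for $s>0$), this reads
$$\widehat{f}(\rho_{+})\phi(s)=\int_{0}^{\infty} s^{1/2}\,t^{-3/2}\,\tilde f\!\bigl(s,t/s\bigr)\,\phi(t)\,dt,$$
which exhibits $\widehat{f}(\rho_{+})$ as an integral operator with explicit kernel $K_{+}(s,t)=s^{1/2}t^{-3/2}\tilde f(s,t/s)$. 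The analogous identity holds for $\rho_{-}$ on $\R_{-}$.

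Next, since $K_{\pm}^{1/2}$ acts by multiplication by $|t|^{1/2}$, the operator $\widehat{f}(\rho_{+})K_{+}^{1/2}$ has kernel $s^{1/2}t^{-1}\tilde f(s,t/s)$. The Hilbert--Schmidt norm is then the $L^{2}$-norm of this kernel,
$$\bigl\|\widehat{f}(\rho_{+})K_{+}^{1/2}\bigr\|_{S_{2}}^{2}=\int_{0}^{\infty}\!\!\int_{0}^{\infty} s\,t^{-2}\bigl|\tilde f(s,t/s)\bigr|^{2}\,dt\,ds.$$
Reversing the earlier change of variables by setting $a=t/s$ (so $dt=s\,da$) collapses the Jacobian and yields
$$\bigl\|\widehat{f}(\rho_{+})K_{+}^{1/2}\bigr\|_{S_{2}}^{2}=\int_{0}^{\infty}\!\!\int_{0}^{\infty} a^{-2}\bigl|\tilde f(s,a)\bigr|^{2}\,da\,ds,$$
and the same computation on $\R_{-}$ gives the corresponding integral over $s<0$. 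Summing the two representations produces
$$\sum_{j\in\{\pm\}}\bigl\|\widehat{f}(\rho_{j})K_{j}^{1/2}\bigr\|_{S_{2}}^{2}=\int_{0}^{\infty}\!\!\int_{-\infty}^{\infty} a^{-2}\bigl|\tilde f(s,a)\bigr|^{2}\,ds\,da.$$

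Finally, for each fixed $a>0$, the classical Plancherel theorem on $\R$ gives $\int_{\R}|\tilde f(s,a)|^{2}\,ds=\int_{\R}|f(b,a)|^{2}\,db$ (with the normalization matching that chosen in the Fourier transform on $U$), and substituting this identity produces $\|f\|_{L^{2}(U)}^{2}$ with respect to the left Haar measure $db\,da/a^{2}$. The anticipated obstacles are bookkeeping rather than conceptual: keeping the Jacobian from the dilation substitution balanced against the $|t|^{1/2}$ arising from the Duflo--Moore operator (this is exactly where nonunimodularity is absorbed), and making sure that the integral operator representation is valid on a dense domain so that the Hilbert--Schmidt norm formula applies. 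Once the identity is established for $f\in C_c(U)$, density of $C_c(U)$ in $L^{1}(U)\cap L^{2}(U)$ and the closedness of the operators $K_{\pm}^{1/2}$ yield the general statement.
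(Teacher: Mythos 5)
Your argument is correct and is essentially the computation the paper itself carries out (for general $p^{\prime}$, specializing to $p^{\prime}=2$ here) in the lead-up to equation \eqref{S2 Norm}: realize $\widehat{f}(\rho_{\pm})K_{\pm}^{1/2}$ as an integral operator via the partial Fourier transform in $b$ and the substitution $t=as$, identify the Hilbert--Schmidt norm with the $L^{2}$ norm of the kernel, undo the substitution, and finish with the Euclidean Plancherel theorem in $s$. The one loose end is the constant you wave at: with your unnormalized $\tilde f$ the right-hand side comes out as $2\pi\|f\|_{L^{2}(U)}^{2}$, so to match the stated identity the partial Fourier transform must carry a factor $(2\pi)^{-1/2}$ or, equivalently, $K_{\pm}$ must include the factor $\frac{1}{2\pi}$ that the paper does build into its Duflo--Moore operators for the affine Poincar\'e group.
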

For $G=U$, recall the definition of Weyl transform defined in \eqref{Weyl1}, and the Theorem \ref{bounded} says that it is bounded when $1\leq p \leq 2$. Now we prove the unboundedness of Weyl transform when $p>2$.
\begin{theorem}\label{unbounded1}
For $2<p<\infty$, there exists an operator valued function $\sigma$ in $L^p(U\times \widehat{U}, S_p)$ such that $W_{\sigma}$ is not a bounded linear operator on $L^2(U)$. 
\end{theorem}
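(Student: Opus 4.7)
The proof will proceed by contradiction, following the blueprint of \cite{Simon} on $\R^n$ and its adaptation to nonunimodular groups in \cite{Ghosh}. Assume for the given $p\in(2,\infty)$ that there exists $C_p>0$ with $\|W_\sigma\|_{L^2(U)\to L^2(U)}\le C_p\,\|\sigma\|_{p,\mu}$ uniformly in $\sigma\in L^p(U\times\widehat U,S_p)$. Pairing with \eqref{Weyl1}, for every $f,g\in C_c(U)$,
\begin{equation*}
|\langle W(f,g),\sigma\rangle_\mu|=|\langle W_\sigma f,\overline g\rangle|\le C_p\,\|\sigma\|_{p,\mu}\,\|f\|_{L^2(U)}\,\|g\|_{L^2(U)}.
\end{equation*}
Taking the supremum over $\sigma$ with $\|\sigma\|_{p,\mu}\le 1$ and invoking the duality between $L^p(U\times\widehat U,S_p)$ and $L^{p'}(U\times\widehat U,S_{p'})$ would yield
\begin{equation*}
\|W(f,g)\|_{p',\mu}\le C_p\,\|f\|_{L^2(U)}\,\|g\|_{L^2(U)},\qquad p'=\tfrac{p}{p-1}\in(1,2),
\end{equation*}
strictly extending Theorem 1.1 below its stated range.

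I would then transfer this Wigner bound to a statement about the Fourier transform on $U$. Fixing $g\in C_c(U)$ with $C=\int_U g\,d\mu\neq 0$ and noting that $\supp_x W(f,g)\subset \supp(f)\cdot\supp(g)$ is compact, Minkowski's inequality for the Schatten norm weighted by $K_\pm^{1/p'}$ combined with H\"older's inequality in the $x$-variable would upgrade Proposition \ref{wigner} to
\begin{equation*}
\Big(\sum_{j\in\{\pm\}}\big\|\widehat f(\rho_j)K_j^{1/p'}\big\|_{S_{p'}}^{p'}\Big)^{1/p'}\le \widetilde C_{p,g}\,\mu\bigl(\supp(f)\cdot\supp(g)\bigr)^{1/p}\,\|f\|_{L^2(U)}.
\end{equation*}
Restricting $f$ to have support inside a fixed compact set $F_0\subset U$, this is a Hausdorff--Young type estimate sending $L^2(F_0)$ into $L^{p'}(\widehat U,S_{p'})$ with $p'<2$, strictly stronger than what the Plancherel theorem provides on $U$.

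The final and hardest step is to exhibit an explicit family $\{f_n\}\subset C_c(F_0)$ violating this inequality. Exploiting the dilation structure encoded in the nonunimodular Haar measure $a^{-2}\,db\,da$, a natural candidate is $f_n(b,a)=\phi(b)\,\psi_n(a)$, with $\phi$ a fixed bump in $b$ and $\{\psi_n\}$ a sequence of bumps translating in $\log a$ at a rate controlled by $n$, all supported inside $F_0$. For such products the operator $\widehat{f_n}(\rho_\pm)$ on $L^2(\R_\pm)$ has an explicit integral kernel, and the Duflo--Moore weighted Schatten norms $\|\widehat{f_n}(\rho_\pm)K_\pm^{1/p'}\|_{S_{p'}}$ can be analyzed through a direct spectral calculation. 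The principal obstacle lies precisely here: one must show that, for a suitable scale of the $\psi_n$'s, multiplication by the unbounded factor $|s|^{1/p'}$ inflates the $S_{p'}$-norm of $\widehat{f_n}(\rho_\pm)$ at a rate strictly greater than $\|f_n\|_{L^2(U)}$. This rests on quantifying the sharp interaction between the spectrum of $K_\pm$, the unitary action of $\rho_\pm$, and the modular factor $a^{-2}$; once such a family is produced, the contradiction with the estimate derived in the previous paragraph is immediate, completing the proof.
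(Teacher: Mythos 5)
Your first two steps reproduce the paper's strategy: the duality argument giving $\|W(f,g)\|_{p',\mu}\le C\|f\|_{L^2(U)}\|g\|_{L^2(U)}$ is Proposition \ref{prop1}, and the transfer to $\sum_{j\in\{\pm\}}\|\F_p(f)\rho_j\|_{S_{p'}}^{p'}<\infty$ for compactly supported $f$ with nonvanishing integral, via Proposition \ref{wigner} plus H\"older and Minkowski, is Proposition \ref{prop2}. (One small logical point: the negation of the theorem only says that \emph{each} $W_\sigma$ is bounded, with a constant a priori depending on $\sigma$; you assume a uniform $C_p$ outright, whereas the paper extracts uniformity from the uniform boundedness principle applied to $\sigma\mapsto\langle W_\sigma f,g\rangle$.) The genuine gap is in your third step, which you yourself label ``the principal obstacle'': you do not produce the counterexample, and the candidate family you propose points in the wrong direction. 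The Duflo--Moore weight acts on the Fourier dual of the \emph{translation} variable $b$; as in \eqref{S2 Norm},
\begin{equation*}
\|\widehat f(\rho_+)K_+^{1/p'}\|_{S_2}^2+\|\widehat f(\rho_-)K_-^{1/p'}\|_{S_2}^2=\int_{-\infty}^{\infty}\int_0^{\infty}\left|(\F_1 f)(s,a)\right|^2 a^{\frac{2}{p'}-3}\,|s|^{\frac{2}{p'}-1}\,ds\,da,
\end{equation*}
with $\F_1$ the partial Fourier transform in $b$. For tensor products $f_n(b,a)=\phi(b)\psi_n(a)$ with $\phi$ a fixed smooth bump, $(\F_1 f_n)(s,a)=\widehat\phi(s)\psi_n(a)$ decays rapidly in $s$, so the integral factors into two convergent pieces, and on a fixed compact set (where $a$ is bounded away from $0$) both factors are comparable to $\|\phi\|_{L^2}^2\|\psi_n\|_{L^2}^2\sim\|f_n\|_{L^2(U)}^2$. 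Translating $\psi_n$ in $\log a$ therefore produces no blow-up of the weighted $S_2$ quantity relative to $\|f_n\|_{L^2(U)}$, and you would be left having to show that the $S_{p'}$ norms ($p'<2$) diverge while the weighted Hilbert--Schmidt norms stay bounded --- a singular-value analysis you have not supplied.

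The paper sidesteps all of this with a single function carrying a power singularity in the translation variable: $f_\alpha(b,a)=|b|^{-\alpha}a^2$ on a box, with $1-\tfrac{1}{p'}<\alpha<\tfrac12$. Then $|(\F_1 f_\alpha)(s,a)|\gtrsim|s|^{-1+\alpha}a^2$ for large $s$, the exponent $2(-1+\alpha)+\tfrac{2}{p'}-1$ is at least $-1$, so the displayed integral diverges; and since $\|T\|_{S_{p'}}\ge\|T\|_{S_2}$ for $p'\le 2$, divergence of the weighted $S_2$ norm immediately forces $\|\F_p(f_\alpha)\rho_+\|_{S_{p'}}^{p'}+\|\F_p(f_\alpha)\rho_-\|_{S_{p'}}^{p'}=\infty$, contradicting Proposition \ref{prop2}. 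To repair your argument, replace the smooth bump $\phi$ by such a singular profile so that $\widehat\phi$ decays slowly enough to feel the weight $|s|^{2/p'-1}$; the route through smooth families and a direct $S_{p'}$ spectral computation is substantially harder and is not carried out in your proposal.
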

We prove the Theorem \ref{unbounded1} using the method of contrapositive in three propositions.
\begin{proposition}\label{prop1}
Let $2<p<\infty$ and $\frac{1}{p}+\frac{1}{p^{\prime}}=1$. Then for all $\sigma\in L^p(U\times\widehat{U}, S_p)$, the Weyl transform $W_{\sigma}$ is bounded linear operator on $L^2(U)$ iff there exists a constant $C$ such that $$\|W(f,g)\|_{p^{\prime},\mu}\leq C\|g\|_{L^2(U)}\|f\|_{L^2(U)},$$ for all $f,g$ in $L^2(U)$.
\end{proposition}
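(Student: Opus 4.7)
The plan is a duality argument built around the defining identity $\langle W_\sigma f,\overline{g}\rangle=\langle W(f,g),\sigma\rangle_\mu$ from Definition~\ref{Weyl}, exploiting that $L^{p'}(U\times\widehat{U},S_{p'})$ is the dual of $L^p(U\times\widehat{U},S_p)$ under the pairing $\langle\cdot,\cdot\rangle_\mu$, so that the $L^2$-boundedness of $W_\sigma$ and the bilinear estimate on $W(f,g)$ turn out to be two sides of the same duality coin.

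The direction $(\Leftarrow)$ is the immediate one. Assuming $\|W(f,g)\|_{p',\mu}\le C\|f\|_{L^2(U)}\|g\|_{L^2(U)}$, the H\"older-type inequality for Schatten classes applied to the pairing yields $|\langle W_\sigma f,\overline{g}\rangle|=|\langle W(f,g),\sigma\rangle_\mu|\le\|W(f,g)\|_{p',\mu}\,\|\sigma\|_{p,\mu}\le C\|\sigma\|_{p,\mu}\|f\|_{L^2(U)}\|g\|_{L^2(U)}$, and taking the supremum over $\|g\|_{L^2(U)}\le 1$ produces $\|W_\sigma f\|_{L^2(U)}\le C\|\sigma\|_{p,\mu}\|f\|_{L^2(U)}$, which is the desired bound.

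For $(\Rightarrow)$, the plan is first to promote the pointwise boundedness hypothesis to a uniform estimate $\|W_\sigma\|\le C\|\sigma\|_{p,\mu}$ via the closed graph theorem applied to the linear map $T:L^p(U\times\widehat{U},S_p)\to\mathcal{B}(L^2(U))$, $T(\sigma)=W_\sigma$, and then to read the computation above backwards: $|\langle W(f,g),\sigma\rangle_\mu|=|\langle W_\sigma f,\overline{g}\rangle|\le C\|\sigma\|_{p,\mu}\|f\|_{L^2(U)}\|g\|_{L^2(U)}$, whereupon taking the supremum over $\|\sigma\|_{p,\mu}\le 1$ and invoking the identification $L^{p'}(U\times\widehat{U},S_{p'})\simeq(L^p(U\times\widehat{U},S_p))^{\ast}$ delivers the bilinear inequality on $W(f,g)$. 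The main obstacle is the closedness verification for $T$: if $\sigma_n\to\sigma$ in $L^p$ and $W_{\sigma_n}\to A$ in operator norm, then concluding $A=W_\sigma$ by testing against $\overline{g}$ reduces to continuity of the functional $\sigma\mapsto\langle W(f,g),\sigma\rangle_\mu$ on $L^p$ for $f,g$ in a dense subclass, which in turn requires $W(f,g)\in L^{p'}(U\times\widehat{U},S_{p'})$ for such $f,g$. This last point is the only place where the specific Fourier analysis of the affine group (the explicit form of $\rho_\pm$ and the Duflo--Moore operators $K_\pm$) must be brought to bear, and it can be verified directly for $f,g\in C_c(U)$ by estimating the integral $W(f,g)(x,\rho_\pm)=\int_U f(x')\tau_{x'}g(x)\rho_\pm(x')\,d\mu(x')$; the rest of the argument is standard functional analysis.
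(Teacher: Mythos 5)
Your argument is correct and follows essentially the same duality scheme as the paper: the $(\Leftarrow)$ direction is the identical H\"older estimate, and the $(\Rightarrow)$ direction obtains the uniform bound $\|W_\sigma\|\leq C\|\sigma\|_{p,\mu}$ by a Baire-category argument and then dualizes against $\|\sigma\|_{p,\mu}\leq 1$, the paper merely applying the uniform boundedness principle to the family $M_{f,g}(\sigma)=\langle W_\sigma f,g\rangle$ (over $f,g\in C_c(U)$ with $\|f\|_2=\|g\|_2=1$) where you apply the closed graph theorem to $\sigma\mapsto W_\sigma$. The auxiliary fact you isolate as the ``main obstacle'' --- that $W(f,g)\in L^{p'}(U\times\widehat{U},S_{p'})$ for $f,g\in C_c(U)$, which follows from the compact support of $W(f,g)$ together with the $S_2$ bound of Theorem 1.2 and H\"older --- is exactly what the paper needs (and asserts without proof) for each $M_{f,g}$ to be a bounded functional on $L^p$, and both arguments finish with the same density step to pass from $C_c(U)$ to all of $L^2(U)$.
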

\begin{proof}
Assume that $$\|W(f,g)\|_{p^{\prime},\mu}\leq C\|g\|_{L^2(U)}\|f\|_{L^2(U)},\quad f,g\in L^2(U),$$ for some $C>0$. By the definition \ref{Weyl}, we obtain the necessary part of the theorem. Conversely, suppose that for each $\sigma\in L^p(U\times\widehat{U}, S_p)$, $W_{\sigma}$ is bounded, i.e.,
$$\|W_{\sigma}f\|_{L^2(U)}\leq C({\sigma})\|f\|_{L^2(U)},$$ for all $f\in L^2(U)$. Define a family of linear operators $M_{f,g}:L^p(U\times\widehat{U}, S_p)\to \mathbb{C}$ by $$M_{f,g}(\sigma)=\langle W_{\sigma}f,g\rangle,\quad f,g\in C_c(U).$$ For each $f,g\in C_c(U)$, it is easy to check that this family is pointwise bounded linear functional. By uniform bounded principle, there exists $C$ such that $\|M_{f,g}\|\leq C$, for all $ f,g\in C_c(U), \|f\|_2=\|g\|_2=1$. Hence $|\langle W_{\sigma}f,g\rangle|\leq C\|\sigma\|$. Thus for $f,g\in C_c(U)$,
\begin{align*}
    \|W(f,g)\|_{p^{\prime},\mu}&=\sup_{\|\sigma\|_{p,\mu}=1}|\langle W(f,g),\sigma\rangle_{\mu}|\\
    &=\sup_{\|\sigma\|_{p,\mu}=1}|\langle W_{\sigma}f,\overline{g}\rangle|\leq C\|f\|_2\|g\|_2.
\end{align*}
By density argument, we conclude the proposition.

\end{proof}
\begin{proposition}\label{prop2}
Let $2<p<\infty$ and $f$ be a square integrable, compactly supported function on $U$ such that $\int\limits_{U}f(b,a)d\mu(b,a)\neq 0$. If $W_{\sigma}$ is a bounded operator on $L^2(U)$ for all $\sigma$ in $L^p(U\times\widehat{U},S_p)$, then 
$$\|\F_p(f)\rho_{+}\|_{S_{p^{\prime}}}^{p^{\prime}}+\|\F_p(f)\rho_{-}\|_{S_{p^{\prime}}}^{p^{\prime}}<\infty.$$
\end{proposition}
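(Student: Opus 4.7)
The plan is to combine the Fourier-recovery formula from Proposition \ref{wigner} with the bilinear Wigner bound that Proposition \ref{prop1} extracts from the hypothesis, and then exploit the compact support of $f$ via Hölder's inequality to convert an integrated Wigner bound into a pointwise-in-$\rho$ norm bound on $\F_p(f)\rho_\pm$.

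First, by Proposition \ref{prop1}, the assumed boundedness of $W_\sigma$ on $L^2(U)$ for every $\sigma \in L^p(U \times \widehat{U}, S_p)$ supplies a constant $C_1 > 0$ with $\|W(h_1, h_2)\|_{p',\mu} \leq C_1 \|h_1\|_{L^2(U)} \|h_2\|_{L^2(U)}$ for all $h_1, h_2 \in L^2(U)$. Specializing to $h_1 = h_2 = f$ gives
\[
\sum_{j \in \{\pm\}} \int_U \|W(f,f)(y,\rho_j) K_j^{1/p'}\|_{S_{p'}}^{p'}\, d\mu(y) \leq C_1^{p'} \|f\|_{L^2(U)}^{2p'} < \infty.
\]
Next, since $C_0 := \int_U f(b,a)\, d\mu(b,a) \neq 0$, Proposition \ref{wigner} applied with its second argument taken equal to $f$ itself yields $\widehat{f}(\rho_\pm) = C_0^{-1} \int_U W(f,f)(y,\rho_\pm)\, d\mu(y)$; multiplying on the right by $K_\pm^{1/p'}$ and absorbing it into the integrand produces the analogous formula for $\F_p(f)\rho_\pm$.

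The key step is to exploit the compact support of $f$. Because $W(f,f)(y,\rho) = \int_U f(x') f(x'^{-1} y) \rho(x')\, d\mu(x')$ has convolution-type structure in $y$, the map $y \mapsto W(f,f)(y,\rho_\pm)$ vanishes outside the compact set $K := \supp(f) \cdot \supp(f) \subset U$. Applying Minkowski's integral inequality to bring the $S_{p'}$ norm inside, and then Hölder's inequality on $K$ with conjugate exponents $p'$ and $p'/(p'-1)$, yields
\[
\|\F_p(f)\rho_\pm\|_{S_{p'}}^{p'} \leq |C_0|^{-p'}\, \mu(K)^{p'-1} \int_U \|W(f,f)(y,\rho_\pm) K_\pm^{1/p'}\|_{S_{p'}}^{p'}\, d\mu(y).
\]
Summing over $j \in \{\pm\}$ and invoking the first-step bound delivers the claimed finiteness.

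The main obstacle I anticipate is the rigorous justification of Minkowski's integral inequality for $S_{p'}$-valued Bochner integrals combined with the interchange of the unbounded Duflo--Moore operator $K_\pm^{1/p'}$ with the integral over $y$. Both points should be tractable by first establishing the identity on a dense subspace of smooth compactly supported vectors in $L^2(\R_\pm)$, where everything reduces to scalar integrals, and then extending by continuity; the compact support of $f$ together with the local finiteness of $d\mu$ on $K$ is what makes these manipulations legitimate.
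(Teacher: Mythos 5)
Your proposal is correct and follows essentially the same route as the paper: apply Proposition \ref{wigner} with $g=f$ to write $\F_p(f)\rho_\pm$ as $C_0^{-1}$ times the integral of $W(f,f)(\cdot,\rho_\pm)K_\pm^{1/p'}$ over the compact set $\supp(f)\cdot\supp(f)$, then use Minkowski's integral inequality and H\"older's inequality (your factor $\mu(K)^{p'-1}$ is exactly the paper's $\mu(AA)^{p'/p}$) together with the bilinear bound from Proposition \ref{prop1}. The only difference is that you explicitly flag the interchange of the unbounded operator $K_\pm^{1/p'}$ with the Bochner integral, a point the paper passes over silently.
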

\begin{proof}
From Proposition \ref{wigner}, it is enough to show
$$\sum_{j\in\{\pm\}}\|\int_{U}W(f,f)(b,a,\rho_j)K_{j}^{\frac{1}{p^{\prime}}}d\mu(b,a)\|_{S_{p^{\prime}}}^{p^{\prime}}<\infty,$$ for all square integrable, compactly supported function $f$ on $U$ such that $\int\limits_{U}f(b,a)d\mu(b,a)\neq 0$. Suppose $f$ is supported in $A$ of $U$, then $W(f,f)$ is supported in $AA\times \widehat{U}$.
Now by H\"older's inequality, Minkowski's integral inequality, and Proposition \ref{prop1}, we get
\begin{align*}
  &\sum_{j\in\{\pm\}}\|\int_{U}W(f,f)(b,a,\rho_j)K_{j}^{\frac{1}{p^{\prime}}}d\mu(b,a)\|_{S_{p^{\prime}}}^{p^{\prime}}\\
  &\leq \left(\int_{AA}\left(\sum_{j\in\{\pm\}}\|W(f,f)(b,a,\rho_j)K_{j}^{\frac{1}{p^{\prime}}}\|_{S_{p^{\prime}}}^{p^{\prime}}\right)^{\frac{1}{p^{\prime}}} d\mu(b,a)\right)^{p^{\prime}}\\
  & \leq \left(\int_{AA}d\mu(b,a)\right)^{\frac{p^{\prime}}{p}}\int_{AA}\sum_{j\in\{\pm\}}\|W(f,f)(b,a,\rho_j)K_{j}^{\frac{1}{p^{\prime}}}\|_{S_{p^{\prime}}}^{p^{\prime}}d\mu(b,a)<\infty.
\end{align*}
This completes the proof.
\end{proof}
\begin{proposition}\label{example1}
For $p\in (2,\infty)$, does there exists a square-integrable, compactly supported function $f$ on $U$ with $\int\limits_{U} f(b,a)\dfrac{dbda}{a^2}\neq 0$ such that $\|\F_p(f)\rho_{+}\|_{S_{p^{\prime}}}^{p^{\prime}}+\|\F_p(f)\rho_{-}\|_{S_{p^{\prime}}}^{p^{\prime}}=\infty?$
\end{proposition}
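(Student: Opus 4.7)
The plan is to answer this affirmatively by constructing an explicit such $f$. My strategy proceeds in three stages: first, reduce the $S_{p'}$ estimate to a Hilbert--Schmidt estimate using the Schatten embedding; second, compute the integral kernel of $\F_p(f)\rho_\pm$ on $L^2(\R_\pm)$ and translate the HS norm into a weighted $L^2$-integral of the partial Fourier transform of $f$; third, choose $f(b,a)=\chi_{[1,2]}(a)\,|b|^{-\alpha}\chi_{[-1,1]}(b)$ for some $\alpha\in(1/p,1/2)$ and exploit the homogeneous Fourier transform of $|b|^{-\alpha}$ to force divergence.

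For the first two stages, inserting $(\rho_+(b,a)\phi)(s)=a^{1/2}e^{-ibs}\phi(as)$ into $\widehat f(\rho_+)$, post-composing with $K_+^{1/p'}$, and substituting $t=as$ in the $a$-integral expresses $\F_p(f)\rho_+$ as an integral operator on $L^2(\R_+)$ with kernel
\begin{equation*}
K(s,t)=(s/t)^{1/2}\,t^{-1/p}\,\widetilde f(s,t/s),\qquad \widetilde f(s,a):=\int_\R f(b,a)\,e^{-ibs}\,db,
\end{equation*}
and analogously on $L^2(\R_-)$. A further change of variables $u=t/s$ in the Hilbert--Schmidt integral then yields
\begin{equation*}
\sum_{j\in\{\pm\}}\|\F_p(f)\rho_j\|_{S_2}^2 \;=\; \int_\R\!\int_0^\infty |s|^{1-2/p}\,u^{-1-2/p}\,|\widetilde f(s,u)|^2\,du\,ds.
\end{equation*}
Because $p'<2$ gives the Schatten inclusion $\|T\|_{S_{p'}}\ge\|T\|_{S_2}$, it suffices to make this integral infinite.

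For the explicit choice, take $f(b,a)=\chi_{[1,2]}(a)\,g(b)$ with $g(b)=|b|^{-\alpha}\chi_{[-1,1]}(b)$ and $1/p<\alpha<1/2$; such $\alpha$ exists since $p>2$. Direct estimates show $f$ is compactly supported, $f\in L^2(U)$ (as $2\alpha<1$ and $a^{-2}$ is bounded on $[1,2]$), and $\int_U f\,d\mu>0$. Then $\widetilde f(s,u)=\chi_{[1,2]}(u)\widehat g(s)$, so the integral factorizes and the $u$-part is a positive finite constant. It remains to show $\int_\R|s|^{1-2/p}|\widehat g(s)|^2\,ds=\infty$. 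Writing $\widehat g(s)=2\int_0^1 b^{-\alpha}\cos(bs)\,db$ and substituting $bs=t$ gives, for $s>0$,
\begin{equation*}
\widehat g(s)\;=\;2\,s^{\alpha-1}\int_0^s t^{-\alpha}\cos t\,dt,
\end{equation*}
and the limit $\int_0^\infty t^{-\alpha}\cos t\,dt=\Gamma(1-\alpha)\sin(\pi\alpha/2)$ is finite and nonzero for $\alpha\in(0,1)$. Hence $|\widehat g(s)|^2\sim c\,|s|^{2\alpha-2}$ as $|s|\to\infty$, so the integrand behaves like $|s|^{2\alpha-1-2/p}$, which is non-integrable precisely because $\alpha>1/p$.

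The main technical point is the asymptotic $\widehat g(s)\sim c|s|^{\alpha-1}$, which reduces to a classical convergent oscillatory integral; everything else---the kernel formula, the change of variables, the Schatten embedding, and verifying $f\in L^2(U)$ with $\int f\,d\mu\neq 0$---is routine bookkeeping. The divergence threshold $\alpha>1/p$ has been calibrated so that an admissible $\alpha\in(1/p,1/2)$ exists exactly when $p>2$, matching the range in the statement.
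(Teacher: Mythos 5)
Your proposal is correct and follows essentially the same route as the paper: the same kernel computation for $\F_p(f)\rho_\pm$, the same reduction to the Hilbert--Schmidt norm via $\|T\|_{S_2}\le\|T\|_{S_{p'}}$ for $p'<2$, and the same family of examples $|b|^{-\alpha}$ (times a cutoff in $a$) with the identical threshold $1/p<\alpha<1/2$, the paper's condition $\alpha>1-\tfrac{1}{p'}$ being exactly $\alpha>\tfrac1p$. The only differences are cosmetic (your $a$-cutoff $\chi_{[1,2]}$ versus the paper's $a^2$ on $0<a\le p$, and your explicit $\Gamma(1-\alpha)\sin(\pi\alpha/2)$ evaluation where the paper merely asserts a positive lower bound for $\int_0^{sp}t^{-\alpha}\cos t\,dt$).
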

 The answer to this question is yes. We will certainly find a function $f_{\alpha}, 0<\alpha<\frac{1}{2}$, at the end of the section, which is needed for Proposition \ref{example1}.
Let $f\in L^2(U)$. Now for all $\phi\in L^2(\R_{+})$,
\begin{align*}
    \left(\widehat{f}(\rho_{+})K_+^{\frac{1}{p^{\prime}}}\phi\right)(s)&=\int_0^{\infty}\int_{-\infty}^{\infty}f(b,a)\left(\rho_{+}(b,a)K_{+}^{\frac{1}{p^{\prime}}}\phi\right)(s)\frac{dbda}{a^2}\\
    &= \int_0^{\infty}\int_{-\infty}^{\infty}f(b,a)a^{\frac{1}{2}}e^{-ibs}\left(K_{+}^{\frac{1}{p^{\prime}}}\phi\right)(as)\frac{dbda}{a^2}\\
    &= \int_0^{\infty}(\F_1f)(s,a)a^{\frac{1}{2}}|as|^{\frac{1}{p^{\prime}}}\phi(as)\frac{da}{a^2}\\
    &= \int_0^{\infty}(\F_1f)(s,a)a^{\frac{1}{2}+{\frac{1}{p^{\prime}}}}|s|^{\frac{1}{p^{\prime}}}\phi(as)\frac{da}{a^2}.
\end{align*}
Now substituting $as=t$, we get $da=\frac{dt}{s}$. 
\begin{align*}
 \left(\widehat{f}(\rho_{+})K_+^{\frac{1}{p^{\prime}}}\phi\right)(s) &= \int_0^{\infty}\left(\F_1f\right)\left(s,\frac{t}{s}\right)\frac{t}{s}^{\frac{1}{2}+{\frac{1}{p^{\prime}}}}|s|^{\frac{1}{p^{\prime}}}\frac{s^2}{t^2}\phi(t)\frac{dt}{s}.
\end{align*}
Again for all $\phi\in L^2(\R_{-})$, we have
$$\left(\widehat{f}(\rho_{-})K_{-}^{\frac{1}{p^{\prime}}}\phi\right)(s) = \int_{-\infty}^0\left(\F_1f\right)\left(s,\frac{t}{s}\right)\frac{t}{s}^{\frac{1}{2}+{\frac{1}{p^{\prime}}}}|s|^{\frac{1}{p^{\prime}}}\frac{s^2}{t^2}\phi(t)\frac{dt}{s}.$$
Now \begin{align*}
    \|\widehat{f}(\rho_{+})K_{+}^{\frac{1}{p^{\prime}}}\|_{S_2}^2+\|\widehat{f}(\rho_{-})K_{-}^{\frac{1}{p^{\prime}}}\|_{S_2}^2&=\int_0^{\infty}\int_0^{\infty}\left|\left(\F_1f\right)\left(s,\frac{t}{s}\right)\right|^2\frac{t}{s}^{1+{\frac{2}{p^{\prime}}}}|s|^{\frac{2}{p^{\prime}}-2}\frac{s^4}{t^4}dsdt\\
    &+\int_{-\infty}^0\int_{-\infty}^0 \left|\left(\F_1f\right)\left(s,\frac{t}{s}\right)\right|^2\frac{t}{s}^{1+{\frac{2}{p^{\prime}}}}|s|^{\frac{2}{p^{\prime}}-2}\frac{s^4}{t^4}dsdt.
\end{align*}
 Substituting $\frac{t}{s}=a$, then we get $\frac{dt}{s}=da$. 
 \begin{align}\label{S2 Norm}
   \|\widehat{f}(\rho_{+})K_{+}^{\frac{1}{p^{\prime}}}\|_{S_2}^2+\|\widehat{f}(\rho_{-})K_{-}^{\frac{1}{p^{\prime}}}\|_{S_2}^2&=\int_0^{\infty}\int_0^{\infty}\left|\left(\F_1f\right)\left(s,a\right)\right|^2a^{1+{\frac{2}{p^{\prime}}}}|s|^{\frac{2}{p^{\prime}}-1}\frac{1}{a^4}dsda\nonumber\\
    &+\int_{-\infty}^0\int_0^{\infty} \left|\left(\F_1f\right)\left(s,a\right)\right|^2a^{1+{\frac{2}{p^{\prime}}}}|s|^{\frac{2}{p^{\prime}}-1}\frac{1}{a^4}dsda \nonumber \\
    &= \int_{-\infty}^{\infty}\int_0^{\infty} \left|\left(\F_1f\right)\left(s,a\right)\right|^2a^{1+{\frac{2}{p^{\prime}}}-4}|s|^{\frac{2}{p^{\prime}}-1}dsda.
 \end{align}
For $0<\alpha<\frac{1}{2}$, let us choose  $$f_{\alpha}(b,a)= 
\begin{cases}
|b|^{-\alpha}a^2,\quad (b,a)\in Q\\
0, \quad\quad \text{otherwise},
\end{cases}
$$
where $Q=\{(b,a):-p\leq b,a \leq p\},p>0$. Hence $$(\F_1f_{\alpha})(s,a)=\left[ (2\pi)^{-1/2}\int_{-p}^p e^{-ibs}|b|^{-\alpha}db\right]a^2.$$
Now for $s>0$,
$$\int_{-p}^p e^{-ibs}|b|^{-\alpha}db=2\left(\int_0^{s p}t^{-\alpha}\cos tdt\right)s^{-1+\alpha},\quad j=1,2.$$ When $\alpha\in (0,\frac{1}{2})$, then $|\int\limits_0^{s p}t^{-\alpha}\cos tdt|\geq B,$ for some $B>0$. Now 
\begin{align*}
 \int_{-\infty}^{\infty}\int_0^{\infty} \left|\left(\F_1f\right)\left(s,a\right)\right|^2a^{1+{\frac{2}{p^{\prime}}}-4}|s|^{\frac{2}{p^{\prime}}-1}dsda&\geq\dfrac{2B^2}{\pi}\int_0^p\int_R^{\infty}  s^{2(-1+\alpha)+\frac{2}{p^{\prime}}-1}a^{1+{\frac{2}{p^{\prime}}}}dsda.
\end{align*}
But $$\int_R^{\infty}s^{2(-1+\alpha)+{\frac{2}{p^{\prime}}-1}}ds=\infty,$$ when $\alpha>1-\frac{1}{p^{\prime}}$. Also $f_{\alpha}$ is square integrable if $\alpha<\frac{1}{2}$. Let us choose $\frac{1}{2}>\alpha>1-\frac{1}{p^{\prime}}$, then the Equation \eqref{S2 Norm} becomes
$$\|\widehat{f_{\alpha}}(\rho_{+})K_{+}^{\frac{1}{p^{\prime}}}\|_{S_2}^2+\|\widehat{f_{\alpha}}(\rho_{-})K_{-}^{\frac{1}{p^{\prime}}}\|_{S_2}^2=\infty.$$ Hence for given $f_{\alpha},\frac{1}{2}>\alpha>1-\frac{1}{p^{\prime}}$, we get $$\|\F_p(f_{\alpha})\rho_{+}\|_{S_{p^{\prime}}}^{p^{\prime}}+\|\F_p(f_{\alpha})\rho_{-}\|_{S_{p^{\prime}}}^{p^{\prime}}=\infty, 1<p^{\prime}<2.$$
\section{Similitude Group}\label{s3}
This section details the Fourier analysis of the similitude group, $\mathbb{SIM}(2)$. In this setup, we will define the Weyl transform formula and show that it cannot be extended as a bounded operator for the symbol in the corresponding $L^p$ spaces with $2<p<\infty$.

The similitude group, $\mathbb{SIM}(2)$, can be considered as the generalization of the affine group. This group contains affine group as a subgroup.  Also one can consider $\mathbb{SIM}(2)$ as a complexification of the affine group, $U$. This group arises in the study of $2$-dimensional wavelet transforms. It is a four-parameter group that contains the translations in the image plane, $\mathbb{R}^2$, global dilations (zooming in and out by $a>0$), and rotations around the origin ($\theta\in [0,2\pi)$). The action on the plane is given by,
$$x=(b,a,\theta)y= aR_{\theta}y+b,\quad x,y\in\R^2,$$ where
$b\in \mathbb{R}^2$, $a\in \mathbb{R}$ and $R_{\theta}$ is the $2\times 2$ rotation matrix,
\begin{equation}
	R_{\theta}=\left(\begin{matrix}
		\cos\theta &  -\sin\theta\\
		\sin\theta   &  \cos\theta
	\end{matrix}\right).
\end{equation}
An useful representation of  $(b,a,\theta)$ is a $3\times 3$ matrix given by
\begin{equation}
	(b,a,\theta)=\left(\begin{matrix}
		aR_\theta &  b\\
		0^{\intercal}   &  1
	\end{matrix}\right),~~~ 0^{\intercal}=(0,0).
\end{equation}
Matrix multiplication then gives the composition of successive transformations and thus the group law is derived from that,
\begin{eqnarray}
	(b,a,\theta)\ast (b^{\prime},a^{\prime},\theta^{\prime})&=&(b+aR_\theta b^{\prime},aa^{\prime},\theta+\theta^{\prime})\nonumber\\
	e&=&(0,1,0)\nonumber\\
	(b,a,\theta)^{-1}&=&(-a^{-1}R_{-\theta}b,a^{-1},-\theta).\nonumber
\end{eqnarray}
With respect to the operation $\ast$, $\mathbb{SIM}(2)$ is a non-abelian group in which $({0},1,0)$ is the identity element and $(\frac{-1}{a}R_{-\theta}{b},\frac{1}{a},-\theta)$ is the inverse of $({b},a,\theta)$ in $\mathbb{SIM}(2)$. Also, it can be shown that $\mathbb{SIM}(2)$ is a non-unimodular group as its left and right Haar measures  given by $$d\mu_{L}({b},a,\theta)=\dfrac{d{b}dad\theta}{a^3},~~ d\mu_{R}({b},a,\theta)=\dfrac{d{b}dad\theta}{a},$$ respectively, are different.

Moreover, the similitude group $\mathbb{SIM}(2)$, has the structure of a semidirect product:
$$\mathbb{SIM}(2)=\mathbb{R}^{2}\rtimes(\mathbb{R}_{\ast}^{+}\times \mathrm{SO}(2)),$$ where $\mathbb{R}^2$ is the subgroup of the translations, $\mathbb{R}_{\ast}^{+}$ that of dilations and $\mathrm{SO}(2)$ of rotations. Topologically, one can identify $\mathbb{R}^2$ with $\mathbb{C}$, the complex plane and $\mathbb{R}_{\ast}^{+}\times \mathrm{SO}(2)$ with $\mathbb{C}^{\ast},$ the complex plane with the origin is removed. Then $$\mathbb{SIM}(2)=\mathbb{C}\rtimes C^{\ast},$$
consisting of $(z,w),$ where $z\in\mathbb{C}$ and $w\in \mathbb{C}^{\ast},$ the group composition law is,
$$(z_1,w_1)(z_2,w_2)=(z_1+w_1z_2,w_1w_2).$$  In particular, if one considers the elements $(z,w)$, with $z=b+ic,$ $c=0$ and $w=ae^{i\theta},$ $\theta=0$ then these elements clearly constitute a subgroup, which is just the affine group of the line. Thus  the affine group forms a subgroup of $\mathbb{SIM}(2)$.

Let $\U(L^2(\mathbb{R}^2))$ be the set of all unitary operators on $L^2(\R^2)$. 
We denote $\pi :\mathbb{SIM}(2)\rightarrow \U(L^2(\mathbb{R}^2))$ be the mapping of $\mathbb{SIM}(2)$ into the group $\U(L^2(\mathbb{R}^2))$, is given by 
$$ \big(\pi({b},a,\theta)\phi\big)(x)=ae^{-i{b}\cdot x}\phi(aR_{-\theta}x),\quad x\in\mathbb{R}^2,$$
for all $(b,a,\theta)$ in $\mathbb{SIM}(2)$ and all $\phi$ in $L^2(\mathbb{R}^2)$.
Here $\pi$ is the only irreducible unitary representation of $\mathbb{SIM}(2)$ on $L^2(\mathbb{R}^2)$ upto equivalence, \cite{San2}. i.e., dual of $\mathbb{SIM}(2)$ is just singleton.

Let $f\in L^1(\mathbb{SIM}(2))\cap L^2(\mathbb{SIM}(2))$ and define the Fourier transform $\widehat{f}$ of $f$ on $\widehat{\mathbb{SIM}(2)}=\{\pi\}$ by 
$$ (\widehat{f}(\pi)\phi)(x)=\int\limits_{\mathbb{SIM}(2)}f({b},a,\theta)(\pi({b},a,\theta)\phi)(x)\frac{d{b}dad\theta}{a^3},\quad x\in\R^2,$$
for all $\phi\in L^{2}(\mathbb{R}^2).$

 Define the function $K_{\pi}\phi$ on $\mathbb{R}^2$ by, \begin{equation}\label{D-M}
	(K_{\pi}\phi)(x)=\|x\|_2^2\phi(x),\quad x\in \R^2,
\end{equation} where $\|x\|_2=\sqrt{x_1^2+x_2^2}$.
Define the $L^p$-Fourier transform $\F_p(f)\pi=\widehat{f}(\pi)K_{\pi}^{\frac{1}{p^{\prime}}}, 1\leq p\leq \infty,\frac{1}{p}+\frac{1}{p^{\prime}}=1.$ Now recall the Fourier inversion and Plancherel formula for the similitude group from \cite{San2}.
\begin{theorem}{(Inversion theorem)}

Let $f$ be in $L^1(\mathbb{SIM}(2))\cap L^2(\mathbb{SIM}(2))$. Then $$f(b,a,\theta)=\operatorname{Tr}\left(\pi(b,a,\theta)^{\ast}\widehat{f}(\pi)K_{\pi}\right).$$
\end{theorem}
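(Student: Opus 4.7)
The plan is to reduce both sides of the identity to a Euclidean Fourier inversion by first obtaining an integral-kernel representation of $\widehat{f}(\pi)K_{\pi}$ and then recovering the trace by integrating along the diagonal. To get the kernel, I would substitute the explicit formula for $\pi(b',a',\theta')$ into the defining integral of $\widehat{f}(\pi)\phi(x)$ and carry out the $b'$-integration, obtaining
$$\widehat{f}(\pi)\phi(x)=\int_0^{\infty}\!\int_0^{2\pi}(\F_1 f)(x,a',\theta')\,\phi(a'R_{-\theta'}x)\,\frac{da'\,d\theta'}{(a')^2},$$
where $\F_1 f$ denotes the Euclidean Fourier transform in the $\R^2$-variable. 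For fixed $x\neq 0$, the map $(a',\theta')\mapsto y=a'R_{-\theta'}x$ is a smooth change of variables between $(0,\infty)\times[0,2\pi)$ and $\R^2\setminus\{0\}$ with $a'=\|y\|/\|x\|$ and Jacobian $da'\,d\theta'=dy/(a'\|x\|^2)$. Composing with the action of the Duflo--Moore operator $K_{\pi}$ (multiplication by $\|y\|^2$) yields the kernel of $\widehat{f}(\pi)K_{\pi}$ in the compact form $K_{A}(x,y)=(\F_1 f)(x,a(x,y),\theta(x,y))\,\tfrac{\|x\|}{\|y\|}$.

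Next I would read off the kernel of $\pi(b,a,\theta)^{\ast}\widehat{f}(\pi)K_{\pi}$ using the unitarity of $\pi$; a short direct computation gives $(\pi(b,a,\theta)^{\ast}\psi)(y)=a^{-1}e^{i(a^{-1}R_{-\theta}b)\cdot y}\psi(a^{-1}R_{\theta}y)$, so the kernel of the product operator is $a^{-1}e^{i(a^{-1}R_{-\theta}b)\cdot y}K_{A}(a^{-1}R_{\theta}y,z)$. Taking the trace amounts to integrating the diagonal $z=y$; the decisive observation is that at $x=a^{-1}R_{\theta}y$, $z=y$ the pair $(a(x,y),\theta(x,y))$ collapses to the fixed value $(a,\theta)$. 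After simplification the trace becomes
$$\operatorname{Tr}\bigl(\pi(b,a,\theta)^{\ast}\widehat{f}(\pi)K_{\pi}\bigr)=\frac{1}{a^2}\int_{\R^2}\!\int_{\R^2} f(b',a,\theta)\,e^{ia^{-1}R_{-\theta}(b-b')\cdot y}\,db'\,dy.$$
The inner $y$-integral is a Fourier representation of the delta distribution at $b'=b$, whose Jacobian factor $a^2$ cancels the $1/a^2$ in front, and the outer integration then returns $f(b,a,\theta)$ (with the standard Fourier normalization constant absorbed into the definition of $\widehat{f}$).

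The main obstacle is the analytic justification: treating the trace as diagonal integration requires $\widehat{f}(\pi)K_{\pi}$ to be a trace-class operator, and the last $y$-integral is a delta-type identity that must be interpreted rigorously. The standard remedy is to first establish the formula on a dense subspace of $L^1(\mathbb{SIM}(2))\cap L^2(\mathbb{SIM}(2))$, for instance on $C_c^{\infty}$ functions supported away from $a=0$, where the Plancherel theorem guarantees that $\widehat{f}(\pi)K_{\pi}^{1/2}$ is Hilbert--Schmidt and hence $\widehat{f}(\pi)K_{\pi}=[\widehat{f}(\pi)K_{\pi}^{1/2}]K_{\pi}^{1/2}$ is trace-class via the $S_1$--$S_2$ H\"older inequality $\|AB\|_{S_1}\le\|A\|_{S_2}\|B\|_{S_2}$. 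Under these hypotheses Fubini justifies all interchanges and the pointwise Fourier inversion on $\R^2$ completes the argument; the extension to arbitrary $f\in L^1\cap L^2$ then follows by a continuity/density argument using the Plancherel identity already recorded above.
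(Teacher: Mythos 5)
The paper offers no proof of this theorem: it is quoted from the companion paper \cite{San2}, so there is no internal argument to compare against. Your overall architecture --- perform the $b'$-integration in the defining integral of $\widehat{f}(\pi)\phi$, pass to the variable $y=a'R_{-\theta'}x$ to obtain an integral kernel, compose with $\pi(b,a,\theta)^{\ast}$, integrate the kernel along the diagonal, and reduce to Euclidean Fourier inversion on $\R^2$ --- is the standard route for such inversion formulas, and your kernel $K_A(x,y)=(\F_1f)\bigl(x,a(x,y),\theta(x,y)\bigr)\tfrac{\|x\|}{\|y\|}$ is exactly the kernel \eqref{kernel1} recorded later in Section \ref{s3} specialized to $p'=1$. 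The Jacobian $da'\,d\theta'=dy/(a'\|x\|^2)$, the formula for $\pi(b,a,\theta)^{\ast}=\pi\bigl((b,a,\theta)^{-1}\bigr)$, and the collapse of $(a(x,y),\theta(x,y))$ to $(a,\theta)$ on the diagonal are all correct, up to powers of $2\pi$ (a normalization the paper itself is not consistent about: compare $K_{\pi}$ here with the factor $\tfrac{1}{2\pi}$ in the Duflo--Moore operators of Section \ref{s4}).

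The genuine gap is the trace-class justification. You propose to write $\widehat{f}(\pi)K_{\pi}=\bigl[\widehat{f}(\pi)K_{\pi}^{1/2}\bigr]K_{\pi}^{1/2}$ and invoke $\|AB\|_{S_1}\le\|A\|_{S_2}\|B\|_{S_2}$; but $K_{\pi}^{1/2}$ is multiplication by $\|x\|_2$ on $L^2(\R^2)$, an unbounded operator that is certainly not Hilbert--Schmidt (no nonzero multiplication operator on $L^2(\R^2)$ is even compact), so the $S_2$--$S_2$ H\"older inequality does not apply and this factorization proves nothing. The standard repair is to factor the \emph{function} rather than the operator: take $f$ of convolution form $f=h\ast k$ with $h,k$ built from $C_c$ functions, use $\widehat{h\ast k}(\pi)=\widehat{h}(\pi)\widehat{k}(\pi)$ together with the semi-invariance $\pi(x)K_{\pi}\pi(x)^{\ast}=\Delta(x)^{-1}K_{\pi}$ of the Duflo--Moore operator to write $\widehat{f}(\pi)K_{\pi}$ as a product $AB^{\ast}$ of two operators that are Hilbert--Schmidt by the Plancherel theorem. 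Then $\operatorname{Tr}(AB^{\ast})=\langle A,B\rangle_{S_2}$ converts the diagonal integration into an absolutely convergent double integral, after which your Fubini and Fourier-inversion computation goes through. Without some such device, neither the membership $\pi(b,a,\theta)^{\ast}\widehat{f}(\pi)K_{\pi}\in S_1$ nor the identity $\operatorname{Tr}(T)=\int K_T(x,x)\,dx$ is justified; and the concluding extension to all of $L^1\cap L^2$ needs control of the right-hand side in trace norm, which convergence in $L^1\cap L^2$ alone does not provide.
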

Let $S_r$ be the space of all $r$-Schatten class operators, which is a Banach space with the norm $\|A\|_{S_r}^r=\operatorname{Tr}\left(A^{\ast}A\right)^{r/2}$ for $1\leq p<\infty$, and for $r=\infty$, the norm is the usual operator norm. 
\begin{theorem}{(Plancherel formula)}
Let $f$ be in $L^1(\mathbb{SIM}(2))\cap L^2(\mathbb{SIM}(2))$. Then 
$$\int_{\mathbb{SIM}(2)}|f(b,a,\theta)|^2\dfrac{dbdad\theta}{a^3}=\|\widehat{f}(\pi)K_{\pi}^{1/2}\|_{S_2}^2.$$
\end{theorem}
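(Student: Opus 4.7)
The plan is to compute $\|\widehat{f}(\pi)K_\pi^{1/2}\|_{S_2}^2$ directly by realising $\widehat{f}(\pi)K_\pi^{1/2}$ as an integral operator on $L^2(\R^2)$ and reading its Hilbert--Schmidt norm off its kernel. Introduce the partial Euclidean Fourier transform $\F_1 f(\xi,a,\theta)=\int_{\R^2} f(b,a,\theta)e^{-ib\cdot\xi}\,db$ in the translation variable $b$. Substituting the explicit formula for $\pi(b,a,\theta)\phi$ and carrying out the $b$-integral first gives
\[(\widehat{f}(\pi)\phi)(x)=\int_0^\infty\!\int_0^{2\pi}\F_1 f(x,a,\theta)\,\phi(aR_{-\theta}x)\,\frac{da\,d\theta}{a^2},\]
and inserting $K_\pi^{1/2}$ before $\phi$ multiplies the integrand by $\|aR_{-\theta}x\|=a\|x\|$.

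Next I perform the change of variables that is natural for the orbit structure: for fixed $x\neq 0$, the map $(a,\theta)\mapsto y=aR_{-\theta}x$ is a bijection of $(0,\infty)\times[0,2\pi)$ onto $\R^2\setminus\{0\}$ with inverse $a=\|y\|/\|x\|$, $\theta=\arg x-\arg y$, and a direct Jacobian computation gives $dy=a\|x\|^2\,da\,d\theta$. Rewriting the inner integral as an integral over $y$ exhibits $\widehat{f}(\pi)K_\pi^{1/2}$ as the integral operator on $L^2(\R^2)$ with kernel
\[K(x,y)=\frac{\|x\|}{\|y\|^2}\,\F_1 f\!\left(x,\tfrac{\|y\|}{\|x\|},\arg x-\arg y\right).\]

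With the kernel in hand, $\|\widehat{f}(\pi)K_\pi^{1/2}\|_{S_2}^2=\iint|K(x,y)|^2\,dx\,dy$. Reversing the $y$-change of variables, the explicit weight $\|x\|^2/\|y\|^4=1/(a^4\|x\|^2)$ in $|K|^2$ combines with $dy=a\|x\|^2\,da\,d\theta$ to produce exactly $1/a^3$, leaving
\[\|\widehat{f}(\pi)K_\pi^{1/2}\|_{S_2}^2=\int_0^{2\pi}\!\int_0^\infty\!\int_{\R^2}|\F_1 f(x,a,\theta)|^2\,dx\,\frac{da\,d\theta}{a^3}.\]
Classical Plancherel on $\R^2$ applied to each slice $\F_1 f(\cdot,a,\theta)$ then converts $dx$ into $db$ and yields the left-hand side of the Plancherel identity.

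The only genuinely delicate step is the Jacobian/measure bookkeeping in the change of variables $y=aR_{-\theta}x$: it is precisely this that produces the $a^{-3}$ factor of the left Haar measure on $\mathbb{SIM}(2)$ and so gives the identity without a residual constant. Everything else (Fubini and the passage from $L^1\cap L^2$ to the kernel representation) is routine and can, if one prefers, be first carried out on $C_c(\mathbb{SIM}(2))$ and extended by density using that $L^1\cap L^2$ is dense in $L^2$.
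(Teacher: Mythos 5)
Your argument is correct and is essentially the computation the paper itself carries out later in Section~\ref{s3} (there for the general weight $K_{\pi}^{1/p^{\prime}}$, of which your case is $p^{\prime}=2$): realise $\widehat{f}(\pi)K_{\pi}^{1/2}$ as an integral operator via the substitution $y=aR_{-\theta}x$ with $dy=a\|x\|^{2}\,da\,d\theta$, read the Hilbert--Schmidt norm off the kernel $\|x\|\,\|y\|^{-2}\F_1 f(x,\|y\|/\|x\|,\arg x-\arg y)$, and finish with Euclidean Plancherel in the translation variable --- the theorem itself is only quoted from \cite{San2}, which proceeds the same way. The one detail to adjust is the normalisation of $\F_1$: to obtain the identity with constant exactly $1$ you should take $\F_1 f(\xi,a,\theta)=(2\pi)^{-1}\int_{\R^2} f(b,a,\theta)e^{-ib\cdot\xi}\,db$ (as the paper does in its computation of $\F_1 f_{\alpha}$), since the unnormalised transform you wrote down would leave a stray factor of $(2\pi)^{2}$ in the final step.
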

For $G=\mathbb{SIM}(2)$, recall the definition of Weyl transform defined in \eqref{Weyl1}, and the Theorem \ref{bounded} says that it is bounded when $1\leq p \leq 2$. Now we prove the unboundedness of Weyl transforms when $p>2$ for the similitude group.
\begin{theorem}\label{unbounded2}
For $2<p<\infty$, there exists an operator valued function $\sigma$ in $L^p(\mathbb{SIM}(2)\times \widehat{\mathbb{SIM}(2)}, S_p)$ such that $W_{\sigma}$ is not a bounded linear operator on $L^2(\mathbb{SIM}(2))$. 
\end{theorem}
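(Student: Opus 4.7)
The argument mirrors the three-proposition structure used for the affine group in Section \ref{s2}, with the representations $\{\rho_+,\rho_-\}$ replaced by the single irreducible representation $\pi$, so the sum over $\widehat{G}$ collapses. I would prove Theorem \ref{unbounded2} by contrapositive through three steps.

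First, by Definition \ref{Weyl} and the Schatten--H\"older inequality in one direction, and by the uniform boundedness principle applied to the pointwise-bounded family of functionals $\sigma\mapsto\langle W_\sigma f,\overline{g}\rangle$ on $L^p(\mathbb{SIM}(2)\times\widehat{\mathbb{SIM}(2)},S_p)$ in the other, I would establish that $W_\sigma$ is bounded on $L^2(\mathbb{SIM}(2))$ for every such $\sigma$ if and only if there exists $C>0$ with
\[
\|W(f,g)\|_{p',\mu}\leq C\|f\|_{L^2(\mathbb{SIM}(2))}\|g\|_{L^2(\mathbb{SIM}(2))},\qquad f,g\in L^2(\mathbb{SIM}(2)).
\]
This is the exact analogue of Proposition \ref{prop1}. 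Next, invoking Proposition \ref{wigner}, H\"older's inequality in Schatten norms, and Minkowski's integral inequality applied on a compact support, I would deduce that under this boundedness hypothesis every compactly supported $f\in L^2(\mathbb{SIM}(2))$ with $\int f\,d\mu\neq 0$ must satisfy $\|\widehat{f}(\pi)K_\pi^{1/p'}\|_{S_{p'}}^{p'}<\infty$, mirroring Proposition \ref{prop2}.

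The third and principal step is exhibiting $f_\alpha$ that violates this Schatten bound. Unwinding $\widehat{f}(\pi)K_\pi^{1/p'}\phi$, performing the $b$-integration to produce the translation Fourier transform $(\F_1 f)(\xi,a,\theta)$ in the first variable, and substituting $y=aR_{-\theta}x$ (whose Jacobian is $a\|x\|^2$), $\widehat{f}(\pi)K_\pi^{1/p'}$ is realized as an integral operator whose Hilbert--Schmidt norm reduces to an explicit weighted $L^2$-integral of the form
\[
\|\widehat{f}(\pi)K_\pi^{1/p'}\|_{S_2}^2 = c\int_0^{2\pi}\!\!\int_0^\infty\!\!\int_{\R^2}|(\F_1 f)(\xi,a,\theta)|^2\,\|\xi\|^{\beta_1(p')}\,a^{\beta_2(p')}\,d\xi\,da\,d\theta,
\]
with exponents $\beta_1(p'),\beta_2(p')$ read off from this change of variables. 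I would then take
\[
f_\alpha(b,a,\theta)=\begin{cases}\|b\|^{-\alpha}a^3, & (b,a,\theta)\in Q,\\ 0, & \text{otherwise,}\end{cases}
\]
on $Q=\{(b,a,\theta):\|b\|\leq p_0,\ 0<a\leq p_0,\ \theta\in[0,2\pi)\}$. Then $f_\alpha$ is compactly supported, square-integrable with respect to the left Haar measure for $\alpha<1$, and has nonzero total integral. Using the standard homogeneous-distribution asymptotic $|(\F_1 \|b\|^{-\alpha}\mathbf{1}_{\|b\|\leq p_0})(\xi)|\sim c\|\xi\|^{\alpha-2}$ at infinity in $\R^2$, the $\xi$-integral above diverges once $\alpha$ is chosen in the appropriate range (the two-dimensional analogue of $\alpha>1-\frac{1}{p'}$ from the affine case, which is nonempty precisely when $p>2$).

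The main technical obstacle is the exponent bookkeeping in this last step: the polar change of variables, the Jacobian $a\|x\|^2$, the Duflo--Moore weight $\|x\|^{2/p'}$ arising from $K_\pi^{1/p'}$, and the factor $a$ from the representation $\pi$ all feed into the exponents $\beta_1(p')$ and $\beta_2(p')$, and the admissible range of $\alpha$ must simultaneously guarantee $f_\alpha\in L^1(\mathbb{SIM}(2))\cap L^2(\mathbb{SIM}(2))$, nonzero integral, and divergence of the weighted Fourier integral. Once these exponents are pinned down, the choice of $\alpha$ is essentially forced, and the remainder of the proof follows the routine pattern already laid out in Section \ref{s2}.
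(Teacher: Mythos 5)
Your reduction is exactly the paper's: the same three-proposition contrapositive (uniform boundedness principle plus duality to get the $\|W(f,g)\|_{p',\mu}\lesssim\|f\|_2\|g\|_2$ criterion, then Proposition \ref{wigner} with H\"older and Minkowski on a compact support to force $\|\widehat{f}(\pi)K_\pi^{1/p'}\|_{S_{p'}}<\infty$), and your kernel computation with Jacobian $a\|x\|^2$ reproduces the paper's weighted integral $\int|(\F_1f)(x,a,\theta)|^2(a\|x\|_2)^{\frac{4}{p'}-2}\,\frac{dx\,da\,d\theta}{a^3}$. The only genuine divergence is the extremal function: the paper takes the product singularity $f_\alpha(b,a,\theta)=|b_1|^{-\alpha}|b_2|^{-\alpha}a^3\theta$ on a box, so that the $b$-Fourier transform factors into two one-dimensional integrals each bounded below by an elementary estimate $|\int_0^{\xi_jp}t^{-\alpha}\cos t\,dt|\geq A$ with $\xi_j^{-1+\alpha}$ growth, giving divergence for $\frac12>\alpha>\frac32-\frac{2}{p'}$ (a nonempty window precisely because $p>2$); you instead take the radial singularity $\|b\|^{-\alpha}$ on a ball and invoke the homogeneous-distribution asymptotic $\sim c\|\xi\|^{\alpha-2}$. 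Your route buys a wider admissible range ($f_\alpha\in L^2$ up to $\alpha<1$ rather than $\alpha<\frac12$) and a cleaner divergence condition $\alpha\geq 2-\frac{2}{p'}$, but the asymptotic as you state it needs care: for the \emph{truncated} profile $\|b\|^{-\alpha}\mathbf{1}_{\|b\|\leq p_0}$ in $\R^2$ the boundary of the ball contributes an oscillatory term of size $\|\xi\|^{-3/2}$, which dominates $\|\xi\|^{\alpha-2}$ whenever $\alpha<\frac12$ and destroys the pointwise lower bound. This is fixable, not fatal, since you may always choose $\alpha\in\bigl(\max(\frac12,\,2-\frac{2}{p'}),\,1\bigr)$, which is nonempty for $1<p'<2$, or replace the sharp cutoff by a smooth one; the paper's product construction sidesteps the issue entirely by reducing to one-dimensional oscillatory integrals.
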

We prove the Theorem \ref{unbounded2} using the method of contrapositive in three propositions.
\begin{proposition}
Let $2<p<\infty$ and $\frac{1}{p}+\frac{1}{p^{\prime}}=1$. Then for all $\sigma\in L^p(\mathbb{SIM}(2)\times\widehat{\mathbb{SIM}(2)}, S_p)$, the Weyl transform $W_{\sigma}$ is bounded linear operator on $L^2(\mathbb{SIM}(2))$ iff there exists a constant $C$ such that $$\|W(f,g)\|_{p^{\prime},\mu}\leq C\|g\|_{L^2(\mathbb{SIM}(2))}\|f\|_{L^2(\mathbb{SIM}(2))},$$ for all $f,g$ in $L^2(\mathbb{SIM}(2))$.
\end{proposition}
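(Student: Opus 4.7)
The proof should follow the same template used in Proposition \ref{prop1} for the affine group, with $U$ replaced by $\mathbb{SIM}(2)$ and $\widehat{U}=\{\rho_+,\rho_-\}$ replaced by the singleton dual $\widehat{\mathbb{SIM}(2)}=\{\pi\}$. The only structural inputs I need are the pairing in Definition \ref{Weyl} (which is stated uniformly for any of the three groups $G$), the bound in Theorem \ref{bounded}, and the fact that the dual of $L^p(\mathbb{SIM}(2)\times\widehat{\mathbb{SIM}(2)},S_p)$ under $\langle\cdot,\cdot\rangle_\mu$ is $L^{p'}(\mathbb{SIM}(2)\times\widehat{\mathbb{SIM}(2)},S_{p'})$, which follows from the standard trace duality for Schatten classes combined with the $L^p$/$L^{p'}$ duality on $\mathbb{SIM}(2)$ with left Haar measure.

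For the sufficiency direction, I assume the Wigner inequality $\|W(f,g)\|_{p',\mu}\le C\|f\|_{2}\|g\|_{2}$ and use the defining identity
\[
\langle W_\sigma f,\overline{g}\rangle = \langle W(f,g),\sigma\rangle_\mu .
\]
Applying the Schatten/$L^p$ Hölder inequality to the right-hand side gives $|\langle W_\sigma f,\overline{g}\rangle|\le \|\sigma\|_{p,\mu}\|W(f,g)\|_{p',\mu}\le C\|\sigma\|_{p,\mu}\|f\|_2\|g\|_2$, and then taking the supremum over $g$ with $\|g\|_2=1$ yields $\|W_\sigma f\|_2\le C\|\sigma\|_{p,\mu}\|f\|_2$, so $W_\sigma$ is bounded on $L^2(\mathbb{SIM}(2))$.

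For the necessity direction, assume every $W_\sigma$ with $\sigma\in L^p(\mathbb{SIM}(2)\times\widehat{\mathbb{SIM}(2)},S_p)$ is bounded on $L^2(\mathbb{SIM}(2))$. Fix $f,g\in C_c(\mathbb{SIM}(2))$ with $\|f\|_2=\|g\|_2=1$ and define the linear functional
\[
M_{f,g}(\sigma)=\langle W_\sigma f,g\rangle,\qquad \sigma\in L^p(\mathbb{SIM}(2)\times\widehat{\mathbb{SIM}(2)},S_p).
\]
Each $M_{f,g}$ is bounded, and for every fixed $\sigma$ the family $\{M_{f,g}(\sigma)\}$ is pointwise bounded by $\|W_\sigma\|$. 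The Banach--Steinhaus theorem then gives a uniform constant $C>0$ with $\|M_{f,g}\|\le C$ for all such $f,g$. Invoking the duality $L^p(\mathbb{SIM}(2)\times\widehat{\mathbb{SIM}(2)},S_p)^{\ast}\cong L^{p'}(\mathbb{SIM}(2)\times\widehat{\mathbb{SIM}(2)},S_{p'})$ together with the identity used above, I compute
\[
\|W(f,g)\|_{p',\mu}=\sup_{\|\sigma\|_{p,\mu}=1}|\langle W(f,g),\sigma\rangle_\mu| =\sup_{\|\sigma\|_{p,\mu}=1}|\langle W_\sigma f,\overline{g}\rangle|\le C\|f\|_2\|g\|_2.
\]
A density argument ($C_c(\mathbb{SIM}(2))$ is dense in $L^2(\mathbb{SIM}(2))$) extends the bound to all $f,g\in L^2(\mathbb{SIM}(2))$.

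Since every technical ingredient is already in place, no step is really a serious obstacle; the only point that deserves care is verifying the Schatten-valued duality $L^p(\mathbb{SIM}(2)\times\widehat{\mathbb{SIM}(2)},S_p)^{\ast}\cong L^{p'}(\mathbb{SIM}(2)\times\widehat{\mathbb{SIM}(2)},S_{p'})$ with respect to the weighted pairing $\langle A,B\rangle_\mu=\int_{\mathbb{SIM}(2)}\operatorname{Tr}(B(x,\pi)^{\ast}A(x,\pi)K_\pi)\,d\mu(x)$, but this is routine because $\widehat{\mathbb{SIM}(2)}$ is a singleton and the weight $K_\pi^{1/p}$ is absorbed into the definitions of $\|\cdot\|_{p,\mu}$ and $\|\cdot\|_{p',\mu}$ exactly as in the affine case.
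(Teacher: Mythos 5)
Your proposal is correct and is essentially the argument the paper intends: the paper simply states that the proof ``follows from Theorem \ref{prop1}'' (the affine-group case), and your write-up is exactly that proof transported to $\mathbb{SIM}(2)$, using the same Hölder/duality step for sufficiency and the same Banach--Steinhaus plus $L^p$--$S_p$ duality and density argument for necessity. No substantive difference from the paper's route.
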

\begin{proof}
The proof follows from Theorem \ref{prop1}.
\end{proof}
\begin{proposition}
Let $2<p<\infty$ and $f$ be a square integrable, compactly supported function on $\mathbb{SIM}(2)$ such that $\int\limits_{\mathbb{SIM}(2)}f(b,a,\theta)d\mu_L(b,a,\theta)\neq 0$. If $W_{\sigma}$ is a bounded operator on $L^2(\mathbb{SIM}(2))$ for all $\sigma$ in $L^p(\mathbb{SIM}(2)\times\widehat{\mathbb{SIM}(2)},S_p)$, then 
$$\|\F_p(f)\pi\|_{S_{p^{\prime}}}^{p^{\prime}}<\infty.$$
\end{proposition}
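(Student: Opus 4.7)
The plan is to imitate the argument used for Proposition \ref{prop2} in the affine group setting, noting that since $\widehat{\mathbb{SIM}(2)}$ is the singleton $\{\pi\}$, the sum over irreducible representations in the affine case collapses to a single term here, which actually simplifies the bookkeeping.

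First, I would invoke Proposition \ref{wigner} with $g = f$, which is legitimate because $C = \int_{\mathbb{SIM}(2)} f(b,a,\theta)\, d\mu_L(b,a,\theta) \neq 0$ by hypothesis. This yields
$$\widehat{f}(\pi) = C^{-1} \int_{\mathbb{SIM}(2)} W(f,f)(b,a,\theta,\pi)\, d\mu_L(b,a,\theta),$$
and consequently
$$\F_p(f)\pi \;=\; \widehat{f}(\pi) K_{\pi}^{1/p'} \;=\; C^{-1} \int_{\mathbb{SIM}(2)} W(f,f)(b,a,\theta,\pi)\, K_{\pi}^{1/p'}\, d\mu_L(b,a,\theta).$$

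Next, since $f$ is compactly supported in some set $A \subset \mathbb{SIM}(2)$, a direct inspection of the Wigner transform shows that $W(f,f)(\cdot,\pi)$ is supported on the product set $AA$, which is compact and hence has finite left Haar measure. I would then apply Minkowski's integral inequality to pass the $S_{p'}$-norm inside the integral, and subsequently Hölder's inequality with conjugate exponents $p'$ and $p$ on the finite-measure set $AA$, to obtain
$$\|\F_p(f)\pi\|_{S_{p'}}^{p'} \;\leq\; |C|^{-p'}\, \mu_L(AA)^{p'/p} \int_{AA} \bigl\|W(f,f)(b,a,\theta,\pi)\, K_{\pi}^{1/p'}\bigr\|_{S_{p'}}^{p'}\, d\mu_L(b,a,\theta).$$
The last integral is precisely $\|W(f,f)\|_{p',\mu}^{p'}$, and by the hypothesis together with the preceding proposition (the analog of Proposition \ref{prop1} for $\mathbb{SIM}(2)$) it is bounded by a constant multiple of $\|f\|_{L^2(\mathbb{SIM}(2))}^{2p'} < \infty$, which closes the argument.

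The proof is essentially routine once the dictionary between the affine and similitude cases is fixed; the only things worth double-checking are that $AA$ indeed has finite left Haar measure (which follows from compactness of $A$ and continuity of the group multiplication), and that the Wigner transform $W(f,f)(\cdot,\pi)$ is actually supported in $AA$ when $f$ is supported in $A$ (which follows from the defining integral $W(f,g)(x,\rho) = \int f(x')\tau_{x'}g(x)\rho(x')\,d\mu(x')$, since the integrand is nonzero only when $x' \in A$ and $x'^{-1}x \in A$, i.e., $x \in AA$). No new analytic difficulty appears relative to the affine case, so I expect no real obstacle; the payoff of this proposition is that it sets up the contradiction in the next step by exhibiting a compactly supported $L^2$ function for which $\|\F_p(f)\pi\|_{S_{p'}}^{p'} = \infty$.
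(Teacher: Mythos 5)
Your proposal is correct and follows essentially the same route as the paper: the paper's proof of this proposition simply defers to the affine-group argument (Proposition \ref{prop2}), which is exactly the chain you reproduce — Proposition \ref{wigner} with $g=f$, the support of $W(f,f)$ in $AA\times\widehat{\mathbb{SIM}(2)}$, Minkowski's integral inequality, H\"older on the finite-measure set $AA$, and the bound from the preceding proposition. The only difference is the collapse of the sum over $\widehat{G}$ to a single term, which you correctly note.
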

\begin{proof}
The proof follows from Theorem \ref{prop2}.
\end{proof}
\begin{proposition}\label{example2}
For $p\in (2,\infty)$, does there exists a square-integrable, compactly supported function $f$ on $\mathbb{SIM}(2)$ with $\int\limits_{\mathbb{SIM}(2)} f(b,a,\theta)d\mu(b,a,\theta)\neq 0$ such that $\|\F_p(f)\pi\|_{S_{p^{\prime}}}^{p^{\prime}}=\infty?$
\end{proposition}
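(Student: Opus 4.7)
The plan is to adapt the argument for Proposition \ref{example1} to the four-parameter setting of $\mathbb{SIM}(2)$. Since $p>2$ forces $p'<2$, we have the Schatten inclusion $S_{p'}\subset S_2$ with $\|T\|_{S_2}\le\|T\|_{S_{p'}}$; consequently it suffices to exhibit a compactly supported, square-integrable $f$ on $\mathbb{SIM}(2)$ with $\int f\,d\mu_L\neq 0$ for which the Hilbert--Schmidt norm $\|\F_p(f)\pi\|_{S_2}$ is already infinite.

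First I would compute the integral kernel of $\widehat{f}(\pi)K_\pi^{1/p'}$. Using $(\pi(b,a,\theta)\phi)(x)=ae^{-ib\cdot x}\phi(aR_{-\theta}x)$ together with the orthogonality identity $\|aR_{-\theta}x\|=a\|x\|$ (so that $K_\pi^{1/p'}$ contributes a factor $a^{2/p'}\|x\|^{2/p'}$), one gets
$$(\widehat{f}(\pi)K_\pi^{1/p'}\phi)(x)=\|x\|^{2/p'}\int (\F_1 f)(x,a,\theta)\,a^{2/p'-2}\,\phi(aR_{-\theta}x)\,da\,d\theta,$$
where $\F_1$ denotes the Fourier transform in the $b$-variable. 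The substitution $y=aR_{-\theta}x$ (with inverse $a=\|y\|/\|x\|$, $\theta=\theta_x-\theta_y$ and Jacobian $(\|x\|\|y\|)^{-1}$) converts this to an integral operator with explicit kernel $K(x,y)$. Computing $\|\F_p(f)\pi\|_{S_2}^2=\iint|K(x,y)|^2\,dx\,dy$ in polar coordinates on both $x$ and $y$ and reversing the substitution yields the analogue of \eqref{S2 Norm}:
$$\|\F_p(f)\pi\|_{S_2}^2=\int_{\R^2\times\R_+\times[0,2\pi)}|(\F_1 f)(x,a,\theta)|^2\,\|x\|^{4/p'-2}\,a^{4/p'-5}\,dx\,da\,d\theta.$$

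Next, take
$$f_\alpha(b,a,\theta)=|b|^{-\alpha}\,a^{3}\,\chi_{B_R}(b)\,\chi_{[a_1,a_2]}(a),\qquad 0<a_1<a_2,\ R>0,$$
for $\alpha\in(0,1)$, where $|b|$ is the Euclidean norm on $\R^2$. Then $f_\alpha$ is compactly supported in $\mathbb{SIM}(2)$, lies in $L^2(\mathbb{SIM}(2))$ precisely because $\alpha<1$, and satisfies $\int f_\alpha\,d\mu_L\ne 0$. Its partial Fourier transform factors as $(\F_1 f_\alpha)(x,a,\theta)=a^3\chi_{[a_1,a_2]}(a)\,I(x)$, where polar coordinates in $b$ give
$$I(x)=\int_{B_R}|b|^{-\alpha}e^{-ib\cdot x}\,db=2\pi|x|^{\alpha-2}\int_0^{R|x|}t^{1-\alpha}J_0(t)\,dt.$$
Since $\int_0^{\infty} t^{1-\alpha}J_0(t)\,dt$ converges to a nonzero constant for $\alpha\in(0,2)$, we obtain $|I(x)|\gtrsim|x|^{\alpha-2}$ as $|x|\to\infty$. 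Inserting this into the $S_2$ identity and noting that the $a$- and $\theta$-integrals contribute finite positive constants yields
$$\|\F_p(f_\alpha)\pi\|_{S_2}^2\gtrsim \int_{|x|\ge R_0}\|x\|^{4/p'-2}\,|x|^{2\alpha-4}\,dx=C\int_{R_0}^{\infty} r^{4/p'+2\alpha-5}\,dr,$$
which diverges precisely when $\alpha\ge 2-2/p'=2/p$. Because $p>2$ makes $2/p<1$, the range $\alpha\in[2/p,\,1)$ is nonempty, and for any such $\alpha$ we conclude $\|\F_p(f_\alpha)\pi\|_{S_{p'}}\ge \|\F_p(f_\alpha)\pi\|_{S_2}=\infty$, hence $\|\F_p(f_\alpha)\pi\|_{S_{p'}}^{p'}=\infty$.

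The main technical hurdle will be the lower bound $|I(x)|\gtrsim|x|^{\alpha-2}$: one must rule out cancellation in the oscillatory integral. The cleanest route invokes the classical Mellin value $\int_0^{\infty} t^{1-\alpha}J_0(t)\,dt=2^{1-\alpha}\Gamma(1-\alpha/2)/\Gamma(\alpha/2)\ne 0$ for $\alpha\in(0,2)$; alternatively one can decompose $|b|^{-\alpha}=|b|^{-\alpha}\chi_{B_R}+|b|^{-\alpha}\chi_{B_R^c}$, using that the homogeneous distribution $|b|^{-\alpha}$ has Fourier transform $c_\alpha|x|^{\alpha-2}$ and that the tail piece contributes a strictly lower-order term at infinity via integration by parts in $b$.
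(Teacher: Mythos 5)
Your overall strategy coincides with the paper's: pass from $S_{p'}$ to $S_2$ via the Schatten inclusion for $p'<2$, compute the integral kernel of $\widehat{f}(\pi)K_{\pi}^{1/p'}$, express the Hilbert--Schmidt norm as a weighted $L^2$ integral of the partial Fourier transform $\F_1 f$ (your weight $\|x\|^{4/p'-2}a^{4/p'-5}$ agrees with the paper's $(a\|x\|_2)^{4/p'-2}a^{-3}$), and then choose $f$ with a power singularity at $b=0$ so that $\F_1 f$ decays too slowly in $x$. The genuine difference is the test function: the paper takes the separable singularity $|b_1|^{-\alpha}|b_2|^{-\alpha}$ on a cube, which reduces everything to the one-dimensional integrals $\int_0^{\xi_j p}t^{-\alpha}\cos t\,dt$, forces $\alpha<\tfrac12$ for square integrability, and yields the divergence condition $\alpha>\tfrac32-\tfrac2{p'}$; you take the radial singularity $|b|^{-\alpha}\chi_{B_R}(b)$, which permits the wider range $\alpha<1$ and the cleaner threshold $\alpha\ge 2/p$, at the cost of needing Bessel-function asymptotics. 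Both reach the goal; your version has the incidental advantage that the support $B_R\times[a_1,a_2]\times[0,2\pi)$ with $a_1>0$ is genuinely compact in the group, whereas the paper's choice $0<a\le p$ is not.

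One point needs repair. The Mellin--Bessel integral $\int_0^\infty t^{1-\alpha}J_0(t)\,dt=2^{1-\alpha}\Gamma(1-\alpha/2)/\Gamma(\alpha/2)$ converges as an improper integral only for $\tfrac12<\alpha<2$, not for all $\alpha\in(0,2)$ as you assert: since $J_0(t)\sim\sqrt{2/(\pi t)}\cos(t-\pi/4)$, convergence requires $1-\alpha<\tfrac12$. For $\alpha\le\tfrac12$ the partial integrals $\int_0^{R|x|}t^{1-\alpha}J_0(t)\,dt$ oscillate with unbounded amplitude and pass through zero along a sequence of radii, so the pointwise lower bound $|I(x)|\gtrsim|x|^{\alpha-2}$ fails on that sequence. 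This does not break the argument: because $2/p<1$ for every $p>2$, the interval $\bigl(\max(\tfrac12,2/p),\,1\bigr)$ is nonempty, and any $\alpha$ in it satisfies square integrability, the divergence condition, and the convergence of the Bessel integral simultaneously. You should state the restriction $\alpha>\tfrac12$ explicitly; note that your alternative route via the distributional Fourier transform of $|b|^{-\alpha}$ carries the same constraint, since the truncation error coming from the jump at $|b|=R$ is only $O(|x|^{-3/2})$, which is lower order than $|x|^{\alpha-2}$ precisely when $\alpha>\tfrac12$.
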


The answer to this question is yes. We will certainly find a function $f_{\alpha}, 0<\alpha<\frac{1}{2}$, at the end of the section, which is needed for Proposition \ref{example2}.
The $L^p$-Fourier transform, for $f\in L^1(\mathbb{SIM}(2)\cap L^p(\mathbb{SIM}(2)$, is defined by $$\F_p (f)\pi=\widehat{f}(\pi)K_{\pi}^{\frac{1}{p^{\prime}}},$$ where $(K_{\pi}\phi)(x)=\|x\|_{2}^2\phi(x)$. The integral representation of the operator $\F_p (f)\pi$ is given by 
$$\left(\widehat{f}(\pi)K_{\pi}^{\frac{1}{p^{\prime}}}\phi\right)(x)=\iint_{\R^2}(\F_1f)\left(x,\dfrac{\|y\|}{\|x\|},\cos^{-1}\left(\dfrac{x\cdot y}{\|x\|\|y\|}\right)\right)\|y\|^{\frac{2}{p^{\prime}}-1}\dfrac{\|x\|}{\|y\|^2}\phi(y)dy,$$
and the kernel 
\begin{equation} \label{kernel1}
		K^f(x,y)=
		\begin{cases}
			(\F_1f)\left(x,\dfrac{\|y\|}{\|x\|},\cos^{-1}\left(\dfrac{x\cdot y}{\|x\|\|y\|}\right)\right)\|y\|^{\frac{2}{p^{\prime}}-1}\dfrac{\|x\|}{\|y\|^2}, &~~ x\neq 0 , y\neq 0,\\
			0, &~~ otherwise.
		\end{cases}
	\end{equation}

Now \begin{align*}
\iint_{\R^4}|K^f(x,y)|^2dxdy &=\iint_{\R^4}\left|(\F_1f)\left(x,\dfrac{\|y\|}{\|x\|},\cos^{-1}\left(\dfrac{x\cdot y}{\|x\|\|y\|}\right)\right)\right|^2\|y\|^{\frac{4}{p^{\prime}}-2}\dfrac{\|x\|^2}{\|y\|^4}dxdy\\
& =\int_{\R^2}\int_0^{\infty}\int_0^{2\pi}|(\F_1f)(x,a,\theta)|^2(a\|x\|_2)^{\frac{4}{p^{\prime}}-2}\dfrac{dxdad\theta}{a^3}.
\end{align*}

For $0<\alpha<\frac{1}{2}$, let us choose $$f_{\alpha}(b,a,\theta)= 
\begin{cases}
|b_1|^{-\alpha}|b_2|^{-\alpha}a^3\theta,\quad (b,a,\theta)\in Q\\
0, \quad\quad \text{otherwise},
\end{cases}
$$
where $Q=\{(b,a,\theta):-p\leq b_1,b_2,a\leq p, \theta\in [0,2\pi)\}$.
Hence \begin{equation}\label{sim}
    (\F_1f_{\alpha})(\xi,a,\theta)=\left[ (2\pi)^{-1}\int_{-p}^p e^{-ib_1\xi_1}|b_1|^{-\alpha}db_1\int_{-p}^p e^{-ib_2\xi_2}|b_2|^{-\alpha}db_2\right]a^3\theta.
\end{equation}
Now for $\xi_1,\xi_2>0$,
\begin{equation}\label{sim2}
    \int_{-p}^p e^{-ib_j\xi_j}|b_j|^{-\alpha}db_j=2\left(\int_0^{\xi_j p}t^{-\alpha}\cos tdt\right)\xi_{j}^{-1+\alpha},\quad j=1,2.
\end{equation} When $\alpha\in (0,\frac{1}{2})$, then $|\int\limits_0^{\xi_j p}t^{-\alpha}\cos tdt|\geq A,$ for some $A>0$. Since $\dfrac{1}{\sqrt{2}}(|\xi_1|+|\xi_2|)\leq \|\xi\|_2\leq (|\xi_1|+|\xi_2|)$,  then $\|\xi\|_2\geq \dfrac{1}{\sqrt{2}}|\xi_1|$. Now using the equations \eqref{sim} and \eqref{sim}, we get
\begin{align*}
 \iint_{\R^4}|K^{f_{\alpha}}(x,y)|^2dxdy & =\int_{\R^2}\int_0^{\infty}\int_0^{2\pi}|(\F_1f_{\alpha})(\xi,a,\theta)|^2(a\|\xi\|_2)^{\frac{4}{p^{\prime}}-2}\dfrac{d\xi dad\theta}{a^3}\\
 &\geq (2\pi)^{-2}(2A)^4\int_0^{2\pi}\int_0^{\infty}\int_R^{\infty}\int_R^{\infty}\xi_1^{2(-1+\alpha)}\xi_2^{2(-1+\alpha)}a^3\theta \left(a|\xi_1|\right)^{\frac{4}{p^{\prime}}-2}\dfrac{d\xi dad\theta}{a^3}\\
 &= \dfrac{4A^4}{\pi^2}C_1^{\frac{4}{p^{\prime}}-2}\int_0^{2\pi}\theta d\theta\int_0^{\infty}a^{\frac{4}{p^{\prime}}-2}da\int_R^{\infty}\xi_1^{2(-1+\alpha)+{\frac{4}{p^{\prime}}-2}}d\xi_1\int_R^{\infty}\xi_2^{2(-1+\alpha)}d\xi_2.
\end{align*}
But $$\int_R^{\infty}\xi_1^{2(-1+\alpha)+{\frac{4}{p^{\prime}}-2}}d\xi_1=\infty,$$ when $\alpha>\frac{3}{2}-\frac{2}{p^{\prime}}$. Also $f_{\alpha}$ is square integrable if $\alpha<\frac{1}{2}$. Let us choose $\frac{1}{2}>\alpha>\frac{3}{2}-\frac{2}{p^{\prime}}, 2>p^{\prime}>\frac{4}{3}$, and  $\frac{1}{2}>\alpha>0>\frac{3}{2}-\frac{2}{p^{\prime}}, 1<p^{\prime}\leq\frac{4}{3}$, then $\widehat{f_{\alpha}}(\pi)K_{\pi}^{\frac{1}{p^{\prime}}}\notin S_2$. Thus for $2<p<\infty$,  $$\|\widehat{f_{\alpha}}(\pi)K_{\pi}^{\frac{1}{p^{\prime}}}\|_{S_{p^{\prime}}}=\infty, \frac{1}{p}+\frac{1}{p^{\prime}}=1,$$ where
$\frac{1}{2}>\alpha>\frac{3}{2}-\frac{2}{p^{\prime}}, 2>p^{\prime}>\frac{4}{3}$, and  $\frac{1}{2}>\alpha>0>\frac{3}{2}-\frac{2}{p^{\prime}}, 1<p^{\prime}\leq\frac{4}{3}$.

\section{Affine Poincar\'e Group}\label{s4}
This section covers the Fourier analysis of the affine Poincar\'e group, $\P$, in detail. In this setup, we will define the Weyl transform and show that it cannot be extended as a bounded operator for the symbol in the corresponding $L^p$ spaces with $2<p<\infty$.

The affine Poincar\'e group, $\P$, is a generalization of the affine group, or more precisely, a complexification of the affine group. This group emerges from the investigation of the $2$-dimensional wavelet transform. Similar to the four-parameter similitude group, $\P$ contains the translations $b$ in the image plane $\R^2$, global dilations (zooming in and out by $a>0$), but it has hyperbolic rotations around the origin ($\v\in\R$). The action on the plane is given by
$$x=(b,a,\v)y= a\Lambda_{\v}y+b,$$ where
	$b\in \mathbb{R}^2$, $a>0$, and $\Lambda_{\v}$ is the $2\times 2$ hyperbolic rotation matrix
	\begin{equation}
		\Lambda_{\v}=\left(\begin{matrix}
			\cosh\v &  \sinh\v\\
			\sinh\v   &  \cosh\v
		\end{matrix}\right).
	\end{equation}
	A convenient representation of the joint transformation $(b,a,\v)$ is in the form of $3\times 3$ matrices
	\begin{equation}
		(b,a,\v)=\left(\begin{matrix}
			a\Lambda_\v &  b\\
			0^{T}   &  1
		\end{matrix}\right),~~~ 0^{T}=(0,0).
	\end{equation}
Then the matrix multiplications gives the composition of successive transformations and thus the group law is derived as
	\begin{eqnarray}
		(b,a,\v)\ast (b^{\prime},a^{\prime},\v^{\prime})&=&(b+a\Lambda_{\v} b^{\prime},aa^{\prime},\v+\v^{\prime}),\nonumber
	\end{eqnarray}
	With respect to the operation $\ast$, $\P$ is a non-abelian group in which $({0},1,0)$ is the identity element and $(\frac{-1}{a}\Lambda_{-\v}{b},\frac{1}{a},-\v)$ is the inverse of $({b},a,\v)$ in $\P$. Also, it can be shown that $\P$ is a non-unimodular group as its left and right Haar measures   $$d\mu_{L}({b},a,\v)=\dfrac{d{b}dad\v}{a^3},~~ d\mu_{R}({b},a,\v)=\dfrac{d{b}dad\v}{a},$$ respectively are different, and hence the modular function is given by $\Delta(b,a,\v)=\dfrac{1}{a^2}$.
	Moreover, the affine Poincar\'e group $\P$ has the structure of a semi-direct product:
	$$\P=\mathbb{R}^{2}\rtimes(\mathbb{R}_{\ast}^{+}\times \mathrm{SO}(1,1)),$$ where $\mathbb{R}^2$ is the subgroup of the translations, $\mathbb{R}^{\ast}_{+}$ that of dilations, and $\mathrm{SO}(1,1)$ of hyperbolic rotations. Topologically, one can write
	$\P=\R^2\times\mathcal{C}$, where $\mathcal{C}$ is any one of the four cones:
	\begin{eqnarray}
		C_{1}^1=\{x\in\R^2:x_1^2>x_2^2, +x_1>0\} \\
		C_{2}^1=\{x\in\R^2:x_1^2>x_2^2,- x_1>0\}\\
		C_{1}^2=\{x\in\R^2:x_1^2<x_2^2,+ x_1>0\}\\
		C_{2}^2=\{x\in\R^2:x_1^2<x_2^2,- x_1>0\}.
	\end{eqnarray}
	 Let us define the Fourier transform $\F$ and inverse Fourier transform $\F^{-1}$,  by 
	 \begin{eqnarray}
	 	(\mathcal{F}\varphi)(\xi)=\frac{1}{2\pi}\int_{\R^2}e^{i\langle\xi;x\rangle}\varphi(x)dx \label{MinkowskiF}\\
	 	(\mathcal{F}^{-1}\varphi)(x)=\frac{1}{2\pi}\int_{\R^2}e^{-i\langle\xi;x\rangle}(\F{\varphi})(\xi)d\xi ,\label{inverseMinkowskiF} 
	 \end{eqnarray}   
for all $\varphi\in S(\R^n)$,	where $\langle x;y\rangle=x_1y_1-x_2y_2$ is the  Minkowski inner product. In this section, we denote $\F$ and $\F^{-1}$ are the Minkowski-Fourier transform and inverse Minkowski-Fourier transform on $\R^2$, respectively. 
\begin{rem}
	The relation between Euclidean Fourier transform and Minkowski-Fourier transform on $\R^2$ is $$(\mathcal{F}\varphi)(\xi_{1},\xi_2)=\widehat{\varphi}(-\xi_1,\xi_2), (\F^{-1}\varphi)(\xi_1,\xi_2)=\widecheck{\varphi}(-\xi_1,\xi_2),$$ where $\widehat{\varphi}$ and $\widecheck{\varphi}$ are the Euclidean Fourier transform  and inverse Euclidean Fourier transform $\varphi$ on $\R^2$, respectively. The reader can easily verify that the Minkowski-Fourier transform carries almost all properties similar to Euclidean Fourier transform on $\R^2$, like the Plancherel formula, and Parseval identity.  
\end{rem}
Define the mappings $\pi_i^j:\P\to\U(L^2(C_i^j)),$ by
$$\left( \pi_i^j(b,a,\v)\phi\right)(x)=ae^{\langle x;b\rangle}\phi(a\Lambda_{\v}x),$$ for all $(b,a,\v)\in\P$ and $\phi\in L^2(C_i^j), i,j=1,2.$ Let $f\in L^1(\P)\cap L^2(\P)$, the Fourier transform is defined by $$\left(\widehat{f}(\pi_i^j)\phi\right)(x)=\int_{\P}f(b,a,\v)\left(\pi_i^j(b,a,\v)\phi\right)(x)d\mu_L(b,a,\v),$$ for all $\phi\in L^2(C_i^j),i,j=1,2.$ Define the Duflo-Moore operators, \cite{San3}, $$(K_{i,j}\phi)(x)=\frac{1}{2\pi}|\langle x;x\rangle|\phi(x), i,j=1,2.$$ Define the $L^p$-Fourier transform $$\F_pf(\pi_i^j)=\widehat{f}(\pi_i^j)K_{i,j}^{\frac{1}{p^{\prime}}}, i,j=1,2,$$ where $\frac{1}{p}+\frac{1}{p^{\prime}}=1, 1\leq p\leq \infty$.

\begin{theorem}[Plancherel Theorem]\cite{San3}
 Let $f$ be in $L^1(\P)\cap L^2(\P)$. Then 
$$\int_{\P}|f(b,a,\v)|^2\dfrac{dbdad\theta}{a^3}=\sum_{i,j=1}^2\|\widehat{f}(\pi_i^j)K_{i,j}^{\frac{1}{2}}\|_{S_2}^2.$$   
\end{theorem}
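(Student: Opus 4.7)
My plan is to compute $\|\widehat{f}(\pi_i^j)K_{i,j}^{1/2}\|_{S_2}^2$ for each pair $(i,j)$ by realizing this operator as an integral operator on $L^2(C_i^j)$, reading off its kernel, and then summing the resulting $L^2$-norms and invoking the Plancherel formula for the Minkowski--Fourier transform in the $b$-variable. Step one is to do the $b$-integration first in the definition of $\widehat f(\pi_i^j)\phi$: using Fubini on a dense class (e.g.\ $f\in C_c(\P)$, extending by continuity at the end), this produces a partial Minkowski--Fourier transform $(\F_1 f)(x,a,\v)$ and gives
$$\bigl(\widehat f(\pi_i^j)\phi\bigr)(x)=2\pi\int_0^\infty\!\!\int_\R (\F_1 f)(x,a,\v)\,\phi(a\Lambda_\v x)\,\frac{da\,d\v}{a^2}.$$

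Step two is the geometric heart of the argument: on each cone $C_i^j$ I would introduce hyperbolic-polar coordinates $x=r_x\,\eta_i^j(\psi_x)$, with $r_x=|\langle x;x\rangle|^{1/2}$ and $\eta_i^j$ a standard hyperbola-parametrization adapted to that cone (e.g.\ $(\cosh\psi,\sinh\psi)$ on $C_1^1$, its negative on $C_2^1$, and $(\sinh\psi,\cosh\psi)$-type parametrizations on the spacelike cones $C_1^2, C_2^2$). Because $\Lambda_\v$ acts by $\psi\mapsto\psi+\v$ on these parameters, the map $(a,\v)\mapsto y:=a\Lambda_\v x$ is a bijection $\R_+\times\R\to C_i^j$ for each fixed $x$, with inverse $a=r_y/r_x$, $\v=\psi_y-\psi_x$, and Jacobian $dy=r_x r_y\,da\,d\v$. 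Substituting $y=a\Lambda_\v x$ and absorbing the multiplication operator $K_{i,j}^{1/2}=(2\pi)^{-1/2}r_{(\cdot)}$ on the right exhibits $\widehat f(\pi_i^j)K_{i,j}^{1/2}$ as an integral operator with kernel
$$K^{(i,j)}_f(x,y)=\sqrt{2\pi}\,(\F_1 f)\!\left(x,\tfrac{r_y}{r_x},\psi_y-\psi_x\right)\frac{r_x}{r_y^2},\qquad x,y\in C_i^j.$$

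Step three is bookkeeping: compute $\|\widehat f(\pi_i^j)K_{i,j}^{1/2}\|_{S_2}^2=\iint_{C_i^j\times C_i^j}|K^{(i,j)}_f(x,y)|^2\,dx\,dy$, then convert the $y$-integral back to $(a,\v)$-coordinates using $dy=r_xr_y\,da\,d\v$. The powers of $r_x$ and $r_y$ collapse (the $r_x^2/r_y^4$ from the kernel combines with $r_xr_y$ from the Jacobian to give $r_x^3/r_y^3=1/a^3$), leaving $\int_{C_i^j}\!\int_0^\infty\!\int_\R|(\F_1 f)(x,a,\v)|^2\,dx\,da\,d\v/a^3$ up to a normalization constant. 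Summing over $(i,j)$ lets $x$ range over $\R^2\setminus\{\langle x;x\rangle=0\}$, a set of full measure; Plancherel for $\F_1$ (which is Euclidean Plancherel after reflecting the second coordinate) then converts $\int_{\R^2}|(\F_1 f)(x,a,\v)|^2\,dx$ into $\int_{\R^2}|f(b,a,\v)|^2\,db$, yielding exactly $\|f\|_{L^2(\P)}^2$.

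The step I expect to be fussiest is the case analysis for the four cones: each requires its own hyperbolic-polar parametrization $\eta_i^j$, and I need to verify that in every case the Jacobian together with the weight $|\langle x;x\rangle|^{1/2}$ from $K_{i,j}^{1/2}$ conspire to produce exactly the factor $a^{-3}$ matching $d\mu_L$, with the same overall constant so that summation gives a clean Plancherel identity. A secondary worry is justifying Fubini in Step one and the subsequent change-of-variable in Step three; working first on $C_c(\P)$ (where all integrands are absolutely integrable) and then extending by continuity of $f\mapsto\widehat f(\pi_i^j)K_{i,j}^{1/2}$ from $L^1\cap L^2$ into the Hilbert--Schmidt operators handles this routinely.
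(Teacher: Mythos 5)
Your proposal is correct and follows essentially the same route as the paper: the paper itself gives no proof of this theorem (it is quoted from \cite{San3}), but the identical kernel computation --- realize $\widehat f(\pi_i^j)K_{i,j}^{1/p^{\prime}}$ as an integral operator on $L^2(C_i^j)$ via the substitution $y=a\Lambda_{\v}x$ with $dy=a|\langle x;x\rangle|\,da\,d\v$, compute the Hilbert--Schmidt norm as $\iint|K_{i,j}^f|^2\,dx\,dy$, sum over the four cones, and apply Minkowski--Plancherel in the $b$-variable --- is carried out for general $p^{\prime}$ right after Proposition \ref{example3}, and your argument is that computation specialized to $p^{\prime}=2$. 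Your exponent bookkeeping (the factor $r_x^3/r_y^3=a^{-3}$) is in fact more careful than the paper's displayed power $a^{\frac{2}{p^{\prime}}-6}$ in \eqref{int}, which should read $a^{\frac{4}{p^{\prime}}-5}$ so that $p^{\prime}=2$ indeed returns the weight $a^{-3}$ of $d\mu_L$.
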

\begin{theorem}[Inversion Theorem]\cite{San3} 
Let $f$ be in $L^1(\P)\cap L^2(\P)$. Then $$f(b,a,\v)=\sum_{i,j=1}^2\operatorname{Tr}\left(\pi_i^j(b,a,\v)^{\ast}\widehat{f}(\pi_i^j)K_{i,j}\right).$$   
\end{theorem}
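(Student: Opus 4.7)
My plan is to compute the integral kernel of each $\widehat f(\pi_i^j)$ on $L^2(C_i^j)$, evaluate the trace $\operatorname{Tr}\bigl(\pi_i^j(b,a,\v)^*\widehat f(\pi_i^j)K_{i,j}\bigr)$ as a single integral over $C_i^j$, and then sum over $(i,j)$. Since the four open cones $C_1^1,C_2^1,C_1^2,C_2^2$ partition $\R^2$ up to the Minkowski light-cone (a set of measure zero), the sum will collapse to an integral over all of $\R^2$, and Minkowski-Fourier inversion on $\R^2$ will reconstruct $f(b,a,\v)$. The argument is a direct analog of the inversion proofs for the affine group in \cite{San1} and for $\mathbb{SIM}(2)$ in \cite{San2}.

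First, I would write out the kernel of $\widehat f(\pi_i^j)$ explicitly. Collapsing the $b$-integration by setting $\tilde f(x,a,\v):=\int_{\R^2}e^{-i\langle x;b\rangle}f(b,a,\v)\,db$ gives
$$(\widehat f(\pi_i^j)\phi)(x)=\int_0^{\infty}\int_{\R}\tilde f(x,a,\v)\,\phi(a\Lambda_{\v}x)\,\frac{da\,d\v}{a^{2}}.$$
For fixed $x\in C_i^j$, the map $(a,\v)\mapsto y=a\Lambda_{\v}x$ is a diffeomorphism onto $C_i^j$, since scaling and hyperbolic rotations both preserve the cone decomposition. A short computation, most transparent in the null-vector basis $e_{\pm}=(1,\pm 1)$ where $\Lambda_{\v}$ is diagonal with eigenvalues $e^{\pm\v}$, shows that its Jacobian is $a|\langle x;x\rangle|$. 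After this substitution $\widehat f(\pi_i^j)$ becomes an honest integral operator on $L^2(C_i^j)$.

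Next, right-multiplication by $K_{i,j}$ multiplies the kernel by $(2\pi)^{-1}|\langle y;y\rangle|$, and composition with $\pi_i^j(b,a,\v)^*$ acts on the $x$-variable by the substitution $x\mapsto a^{-1}\Lambda_{-\v}x$ together with a phase. Taking the trace $\int_{C_i^j}(\,\cdot\,)(x,x)\,dx$ and changing variables a second time to $\xi=a^{-1}\Lambda_{-\v}x$ (Jacobian $a^{2}$, range $C_i^j$), the accumulated powers of $a$ and $|\langle x;x\rangle|$ telescope while the phase becomes $e^{i\langle\xi;b\rangle}$, yielding
$$\operatorname{Tr}\bigl(\pi_i^j(b,a,\v)^*\widehat f(\pi_i^j)K_{i,j}\bigr)=\frac{1}{2\pi}\int_{C_i^j}e^{i\langle\xi;b\rangle}\tilde f(\xi,a,\v)\,d\xi.$$
Summing over $i,j$ converts the right-hand side into $(2\pi)^{-1}\int_{\R^2}e^{i\langle\xi;b\rangle}\tilde f(\xi,a,\v)\,d\xi$, which is exactly $f(b,a,\v)$ by Minkowski-Fourier inversion.

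The main obstacle is the careful bookkeeping of the powers of $a$ and $|\langle x;x\rangle|$ arising from the two successive changes of variable: the Duflo-Moore operator $K_{i,j}=(2\pi)^{-1}|\langle x;x\rangle|$ is engineered exactly so that these factors cancel and the integrand reduces to a pure exponential against $\tilde f$. A secondary technical point is that, for a generic $f\in L^1(\P)\cap L^2(\P)$, the operator $\widehat f(\pi_i^j)K_{i,j}$ is not a priori trace class (only $\widehat f(\pi_i^j)K_{i,j}^{1/2}$ is Hilbert-Schmidt by the stated Plancherel identity); accordingly, one would first establish the formula on a dense subclass of sufficiently smooth, rapidly decaying $f$, and then pass to the limit in the spirit of \cite{San1,San2}.
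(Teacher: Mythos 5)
The paper gives no proof of this theorem, citing \cite{San3}, but your outline is the standard argument and coincides with the computations the paper itself performs later in Section~\ref{s4}: the change of variables $(a,\v)\mapsto y=a\Lambda_{\v}x$ with Jacobian $a|\langle x;x\rangle|$, the cancellation of the resulting weights against the Duflo--Moore factor $|\langle y;y\rangle|$, the diagonal trace integral, and the collapse of the four cone integrals into a single Minkowski--Fourier inversion on $\R^2$. Your closing remark about first working on a dense class where $\widehat f(\pi_i^j)K_{i,j}$ is genuinely trace class is exactly the technical point that needs care, so up to the paper's own loose sign and $2\pi$ conventions this is essentially the same proof.
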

For $G=\P$, recall the definition of Weyl transform defined in \eqref{Weyl1}, and the Theorem \ref{bounded} says that it is bounded when $1\leq p \leq 2$. Now we prove the unboundedness of Weyl transform when $p>2$.
\begin{theorem}\label{unbounded3}
For $2<p<\infty$, there exists an operator valued function $\sigma$ in $L^p(U\times \widehat{U}, S_p)$ such that $W_{\sigma}$ is not a bounded linear operator on $L^2(U)$. 
\end{theorem}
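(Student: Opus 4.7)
The statement as worded in Theorem \ref{unbounded3} is verbatim identical to Theorem \ref{unbounded1} in Section \ref{s2} (both are claims about $U$, $\widehat{U}$, and $L^2(U)$). Taken literally, there is nothing new to prove: Propositions \ref{prop1}, \ref{prop2}, and \ref{example1} already supply the required $\sigma \in L^p(U\times\widehat{U},S_p)$ via the contrapositive method, using the compactly supported $f_\alpha(b,a)=|b|^{-\alpha}a^2\chi_Q$ with $\tfrac{1}{2}>\alpha>1-\tfrac{1}{p'}$. My proposal would begin by pointing this out and, if the author accepts the restatement, simply cite Theorem \ref{unbounded1} and terminate the proof.

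Given the placement of the theorem at the end of Section \ref{s4}, however, the evident intent is the analogous statement with $U$ replaced by $\P$. Under that reading, the plan is to follow the very same three-proposition skeleton as in Sections \ref{s2} and \ref{s3}. First, I would prove the dual equivalence: $W_\sigma$ is bounded on $L^2(\P)$ for every $\sigma\in L^p(\P\times\widehat{\P},S_p)$ if and only if there exists $C>0$ with $\|W(f,g)\|_{p',\mu}\leq C\|f\|_{2}\|g\|_{2}$ for all $f,g\in L^2(\P)$. The direct implication is the definition of $W_\sigma$; the converse is the uniform boundedness principle applied to the family $M_{f,g}(\sigma)=\langle W_\sigma f, \overline{g}\rangle$ on the Banach space $L^p(\P\times\widehat{\P},S_p)$, exactly as in Proposition \ref{prop1}.

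Second, I would use Proposition \ref{wigner} to rewrite $\widehat{f}(\pi_i^j)$ as a Haar-integral of $W(f,f)(\,\cdot\,,\pi_i^j)$, apply Minkowski's integral inequality and Hölder's inequality on the compact support $AA$ of $W(f,f)$, and conclude that uniform $L^2\to L^2$ boundedness of every Weyl transform forces $\sum_{i,j=1}^{2}\|\mathcal{F}_p(f)\pi_i^j\|_{S_{p'}}^{p'}<\infty$ for every square-integrable, compactly supported $f$ with $\int_{\P}f\,d\mu_L\neq 0$. The third step is to produce a function violating this estimate. Mimicking Section \ref{s3}, I would take
\[
f_\alpha(b,a,\vartheta)=|b_1|^{-\alpha}|b_2|^{-\alpha}a^3\chi_Q(b,a,\vartheta),\qquad 0<\alpha<\tfrac{1}{2},
\]
on a bounded box $Q$, expand each $\|\widehat{f_\alpha}(\pi_i^j)K_{i,j}^{1/p'}\|_{S_2}^2$ via the integral kernel of $\mathcal{F}_pf(\pi_i^j)$, and reduce the bound to a one-dimensional estimate of $\int_0^{\xi_j p}t^{-\alpha}\cos t\,dt$, which is bounded below by a positive constant for $\alpha\in(0,1/2)$.

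The principal obstacle is the Duflo--Moore operator $K_{i,j}$, whose symbol is the Minkowski quadratic form $|\langle x;x\rangle|=|x_1^2-x_2^2|$ rather than the Euclidean norm $\|x\|_2^2$ of Section \ref{s3}. This form degenerates on the lightcone $\{x_1^2=x_2^2\}$, so the naive calculation in Section \ref{s3} does not transfer; one must restrict each $\xi$-integral to a region of the cone $C_i^j$ where $|\langle\xi;\xi\rangle|$ is bounded below, say by a fixed positive constant, before inserting the lower bound on $\mathcal{F}_1f_\alpha$. Once this restriction is in place, the computation reduces to a product of one-dimensional divergent integrals of the form $\int_R^\infty \xi_j^{2(-1+\alpha)+\frac{4}{p'}-2}\,d\xi_j$, and choosing $\frac{1}{2}>\alpha>\frac{3}{2}-\frac{2}{p'}$ (together with the complementary range $\frac{1}{2}>\alpha>0$ when $1<p'\leq\frac{4}{3}$) makes the sum diverge, completing the contrapositive.
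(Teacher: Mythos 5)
Your reading of the statement is right: the occurrence of $U$ in Theorem \ref{unbounded3} is copied from Theorem \ref{unbounded1}, and the intended (and actually proved) assertion is the $\P$-version. Your three-proposition skeleton, the reduction of the first two steps to Propositions \ref{prop1} and \ref{prop2}, and the counterexample family built from $|b_1|^{-\alpha}|b_2|^{-\alpha}$ times a power of $a$ on a box are all exactly what the paper does (the paper takes $a^{5}\vartheta$ where you take $a^{3}$; your choice is in fact safer, since $\int_{-p}^{p}\vartheta\,d\vartheta=0$ makes the paper's $f_\alpha$ violate the nonvanishing-integral hypothesis it needs). The one point where your wording would not survive scrutiny is the light-cone degeneracy: restricting to a region where $|\langle\xi;\xi\rangle|$ is merely bounded below by a fixed positive constant is not enough, because then $|\langle\xi;\xi\rangle|^{\frac{2}{p^{\prime}}-1}$ contributes no growth and the surviving integral $\int_R^\infty \xi_j^{2(\alpha-1)}\,d\xi_j$ converges for every $\alpha<\frac12$, killing the argument. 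What you actually need --- and what your displayed exponent $2(-1+\alpha)+\frac{4}{p^{\prime}}-2$ tacitly assumes --- is $|\langle\xi;\xi\rangle|\gtrsim \xi_1^2$, which holds for instance on $\{\xi\in C_1^1: R<\xi_1,\ R<\xi_2\le \xi_1/2\}$; there the $\xi_2$-integral converges while the $\xi_1$-integral diverges for $\frac12>\alpha\ge\frac32-\frac{2}{p^{\prime}}$ (any $\alpha\in(0,\frac12)$ when $p^{\prime}\le\frac43$). The paper instead works on $\{R<x_2<x_1-1\}$ with the weaker bound $x_1^2-x_2^2\ge 2x_2$, integrates $x_2$ up to $x_1-1$, and lands on the threshold $\alpha\ge\frac34-\frac{1}{2p^{\prime}}$; both thresholds leave room below $\frac12$ for all $1<p^{\prime}<2$, so with that one clarification your plan completes the contrapositive just as the paper's does.
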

We prove the Theorem \ref{unbounded3} using the method of contrapositive in three propositions.
\begin{proposition}
Let $2<p<\infty$ and $\frac{1}{p}+\frac{1}{p^{\prime}}=1$. Then for all $\sigma\in L^p(\P\times\widehat{\P}, S_p)$, the Weyl transform $W_{\sigma}$ is bounded linear operator on $L^2(\P)$ iff there exists a constant $C$ such that $$\|W(f,g)\|_{p^{\prime},\mu}\leq C\|g\|_{L^2(\P)}\|f\|_{L^2(\P)},$$ for all $f,g$ in $L^2(\P)$.
\end{proposition}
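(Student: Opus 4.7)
The plan is to imitate the proof of Proposition~\ref{prop1} verbatim, since the argument is essentially group-independent: the only ingredients are the definition of the Weyl transform, the Schatten H\"older duality that pairs $L^p(\P\times\widehat{\P},S_p)$ with $L^{p^{\prime}}(\P\times\widehat{\P},S_{p^{\prime}})$, and the uniform boundedness principle. The only adaptations for $\P$ are cosmetic: the sum in the definition of $\langle\cdot,\cdot\rangle_\mu$ now runs over the four representations $\pi_i^j$, $i,j=1,2$, the weighting is done by the Duflo--Moore operators $K_{i,j}$, and $\mu$ denotes the left Haar measure $d\mu_L(b,a,\v)=\tfrac{db\,da\,d\v}{a^3}$.

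For the sufficiency direction, I would assume $\|W(f,g)\|_{p^{\prime},\mu}\le C\|f\|_{L^2(\P)}\|g\|_{L^2(\P)}$ and plug this into Definition~\ref{Weyl}. Hence for every $\sigma\in L^p(\P\times\widehat{\P},S_p)$ and $f,g\in L^2(\P)$,
\begin{equation*}
|\langle W_\sigma f,\overline{g}\rangle|=|\langle W(f,g),\sigma\rangle_\mu|\le \|W(f,g)\|_{p^{\prime},\mu}\|\sigma\|_{p,\mu}\le C\|\sigma\|_{p,\mu}\|f\|_{L^2(\P)}\|g\|_{L^2(\P)}.
\end{equation*}
Taking the supremum over $g\in L^2(\P)$ with $\|g\|_{L^2(\P)}=1$ yields boundedness of $W_\sigma$ with $\|W_\sigma\|\le C\|\sigma\|_{p,\mu}$.

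For the necessity direction, for each $f,g\in C_c(\P)$ I would define the linear functional $M_{f,g}:L^p(\P\times\widehat{\P},S_p)\to\C$ by $M_{f,g}(\sigma)=\langle W_\sigma f,g\rangle$. The hypothesis that every $W_\sigma$ is bounded on $L^2(\P)$ gives the pointwise bound $|M_{f,g}(\sigma)|\le \|W_\sigma\|$ whenever $\|f\|_2=\|g\|_2=1$, so the Banach--Steinhaus theorem produces a constant $C$ with $\|M_{f,g}\|\le C$ uniformly over such $f,g$. Rewriting $\langle W_\sigma f,\overline{g}\rangle=\langle W(f,g),\sigma\rangle_\mu$ and using the Schatten duality $\|\Phi\|_{p^{\prime},\mu}=\sup_{\|\sigma\|_{p,\mu}=1}|\langle\Phi,\sigma\rangle_\mu|$ yields $\|W(f,g)\|_{p^{\prime},\mu}\le C\|f\|_2\|g\|_2$ for $f,g\in C_c(\P)$. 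A standard density argument then extends the inequality to all of $L^2(\P)$.

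The only genuinely non-routine point is the duality identification, because on the non-unimodular $\P$ it is phrased with the four Duflo--Moore weights $K_{i,j}$ absorbed into the norm. However, since the norm $\|f\|_{p,\mu}$ is defined precisely so that $\sigma\mapsto\sigma K_{i,j}^{1/p}$ is an isometric rearrangement into the standard $L^p$ of $S_p$-valued functions, the usual $L^p$--$L^{p^{\prime}}$ duality applies componentwise, and pairing through the trace recovers the identity $\sup_{\|\sigma\|_{p,\mu}=1}|\langle\Phi,\sigma\rangle_\mu|=\|\Phi\|_{p^{\prime},\mu}$. With this in hand, the argument is complete.
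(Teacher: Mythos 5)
Your proposal is correct and follows essentially the same route as the paper: the paper simply cites the proof of Proposition~\ref{prop1} for the affine group, which is exactly the H\"older/duality argument for sufficiency and the Banach--Steinhaus plus Schatten-duality argument for necessity that you reproduce, adapted to the four representations $\pi_i^j$ and the weights $K_{i,j}$ of $\P$. Your added remark on why the $K_{i,j}$-weighted norms still satisfy the $L^p$--$L^{p^{\prime}}$ duality is a useful clarification that the paper leaves implicit.
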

\begin{proof}
The proof is followed from Theorem \ref{prop1}.
\end{proof}
\begin{proposition}
Let $2<p<\infty$ and $f$ be a square integrable, compactly supported function on $\P$ such that $\int\limits_{\P}f(b,a,\v)d\mu_L(b,a,\v)\neq 0$. If $W_{\sigma}$ is a bounded operator on $L^2(\P)$ for all $\sigma$ in $L^p(\P\times\widehat{\P},S_p)$, then 
$$\|\F_p(f)\pi_{1}^1\|_{S_{p^{\prime}}}^{p^{\prime}}+\|\F_p(f)\pi_{2}^2\|_{S_{p^{\prime}}}^{p^{\prime}}+\|\F_p(f)\pi_{2}^1\|_{S_{p^{\prime}}}^{p^{\prime}}+\|\F_p(f)\pi_{1}^2\|_{S_{p^{\prime}}}^{p^{\prime}}<\infty.$$
\end{proposition}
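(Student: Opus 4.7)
The approach is to mirror the proof of Proposition \ref{prop2} verbatim, with the two-element dual $\{\rho_+,\rho_-\}$ of the affine group replaced by the four-element dual $\{\pi_i^j : i,j\in\{1,2\}\}$ of $\P$. The extra summation creates only bookkeeping; no new analytic difficulty arises, which is presumably why the authors have reduced the preceding proposition for $\P$ to a one-line reference to its affine-group counterpart.

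First, I would invoke the Wigner-transform version of the Fourier inversion (the analog of Proposition \ref{wigner} for $\P$) with $g=f$. Since $C:=\int_\P f\,d\mu_L \neq 0$ by hypothesis, one obtains
\[
\widehat{f}(\pi_i^j) \;=\; C^{-1}\int_\P W(f,f)(y,\pi_i^j)\,d\mu_L(y), \qquad i,j\in\{1,2\}.
\]
Multiplying on the right by $K_{i,j}^{1/p'}$ gives a corresponding representation for $\F_p(f)\pi_i^j$. Since $f$ is compactly supported in some $A\subset\P$, the Wigner transform $W(f,f)(\cdot,\pi_i^j)$ is supported in the compact set $AA$, which has finite left Haar measure.

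Second, I would combine the four summands on the left into a mixed norm on $\ell^{p'}(\{(i,j)\};S_{p'})$ and apply Minkowski's integral inequality to pull the $d\mu_L$-integration outside this norm, followed by H\"older's inequality with conjugate exponents $(p,p')$ on the finite-measure set $AA$. This yields
\[
\sum_{i,j=1}^{2}\|\F_p(f)\pi_i^j\|_{S_{p'}}^{p'} \;\leq\; C^{-p'}\mu_L(AA)^{p'/p}\int_{AA}\sum_{i,j=1}^{2}\|W(f,f)(y,\pi_i^j)K_{i,j}^{1/p'}\|_{S_{p'}}^{p'}\,d\mu_L(y),
\]
which is a constant multiple of $\|W(f,f)\|_{p',\mu}^{p'}$. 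Finally, the hypothesis that $W_\sigma$ is bounded for every $\sigma\in L^p(\P\times\widehat{\P},S_p)$ and the preceding proposition (the $\P$-analog of Proposition \ref{prop1}) give $\|W(f,f)\|_{p',\mu}\leq K\|f\|_{L^2(\P)}^2<\infty$, whence the claimed finiteness.

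I do not anticipate a real obstacle. The only step meriting a brief check is the Minkowski-integral interchange for the mixed norm $\ell^{p'}(\{(i,j)\};S_{p'})$, but this is standard because the mixed-norm space is itself a Banach space; finiteness of $\mu_L(AA)$ is immediate from compactness, so the subsequent application of H\"older is legitimate.
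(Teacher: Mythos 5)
Your proposal is correct and follows essentially the same route as the paper: the paper's own proof of this proposition is a one-line reduction to Proposition \ref{prop2}, whose argument (Wigner-transform representation of $\widehat{f}(\pi_i^j)$ via Proposition \ref{wigner}, compact support of $W(f,f)$ in $AA\times\widehat{\P}$, then Minkowski's integral inequality, H\"older on the finite-measure set $AA$, and the preceding proposition) is exactly what you have written out with the four-element dual in place of $\{\rho_+,\rho_-\}$. No discrepancy to report.
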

\begin{proof}
The proof is followed from Theorem \ref{prop2}.
\end{proof}
\begin{proposition}\label{example3}
For $p\in (2,\infty)$, does there exists a square-integrable, compactly supported function $f$ on $\P$ with $\int\limits_{\P} f(b,a,\vartheta)d\mu(b,a,\v)\neq 0$ such that $$\|\F_p(f)\pi_{1}^1\|_{S_{p^{\prime}}}^{p^{\prime}}+\|\F_p(f)\pi_{2}^2\|_{S_{p^{\prime}}}^{p^{\prime}}+\|\F_p(f)\pi_{2}^1\|_{S_{p^{\prime}}}^{p^{\prime}}+\|\F_p(f)\pi_{1}^2\|_{S_{p^{\prime}}}^{p^{\prime}}=\infty?$$
\end{proposition}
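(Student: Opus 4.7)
My plan is to follow the three-step contrapositive template used in Propositions \ref{example1} and \ref{example2}. Since $p>2$ gives $p'<2$, the Schatten inclusion $S_{p'}\subset S_2$ reduces the task to exhibiting a single cone $C_i^j$ on which the Hilbert--Schmidt norm $\|\widehat{f_\alpha}(\pi_i^j)K_{i,j}^{1/p'}\|_{S_2}$ is infinite for some compactly supported, square-integrable, non-zero-mean $f_\alpha$. I will pick the cone $C_1^1=\{x_1^2>x_2^2,\ x_1>0\}$ to exploit the simple bound $|\langle\xi;\xi\rangle|=\xi_1^2-\xi_2^2\gtrsim \xi_1^2$ on the interior region $|\xi_2|<\lambda|\xi_1|$ with fixed $\lambda<1$.

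First, I would write down the kernel of $\widehat{f}(\pi_1^1)K_{1,1}^{1/p'}$ on $L^2(C_1^1)$ by substituting the definition of $\pi_1^1$, performing the partial Minkowski--Fourier transform $\F_1$ in the $b$-variable, and applying the change of variables $y=a\Lambda_\v x$ inside the cone. Because $\Lambda_\v\in\mathrm{SO}(1,1)$ preserves the Minkowski form, this substitution is a diffeomorphism from $\R_+\times\R$ onto $C_1^1\setminus\{\text{ray through }x\}$ with explicit hyperbolic-polar Jacobian, and it yields $a(x,y)=\sqrt{|\langle y;y\rangle|/|\langle x;x\rangle|}$ while $\v(x,y)$ is the hyperbolic angle between normalized representatives. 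This produces the analogue of formula \eqref{kernel1}, and passing the resulting $\iint_{C_1^1\times C_1^1}|K^f(x,y)|^2\,dx\,dy$ back through the reverse substitution gives
\begin{equation*}
\|\widehat{f}(\pi_1^1)K_{1,1}^{1/p'}\|_{S_2}^2
=c\int_{C_1^1}\int_0^\infty\int_{-\infty}^\infty |(\F_1 f)(\xi,a,\v)|^2\bigl(a^2|\langle\xi;\xi\rangle|\bigr)^{\tfrac{2}{p'}-1}\frac{d\xi\,da\,d\v}{a^3},
\end{equation*}
the hyperbolic analogue of the similitude identity preceding equation~\eqref{sim}.

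Next I would take the test function $f_\alpha(b,a,\v)=|b_1|^{-\alpha}|b_2|^{-\alpha}a^3\,\chi_{Q}(b,a,\v)$, with $0<\alpha<\tfrac12$ and $Q=[-p,p]^2\times[\epsilon,p]\times[-p,p]$ a compact set in $\P$; this choice is positive, manifestly $L^2(\P)$, and has nonzero Haar integral. The partial Minkowski--Fourier transform factors as in \eqref{sim}--\eqref{sim2}, giving the pointwise lower bound $|(\F_1f_\alpha)(\xi,a,\v)|^2\gtrsim |\xi_1|^{2(-1+\alpha)}|\xi_2|^{2(-1+\alpha)}$ on a positive-measure set in $(a,\v)$, via the uniform estimate $\bigl|\int_0^{|\xi_j|p}t^{-\alpha}\cos t\,dt\bigr|\ge A>0$. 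Restricting the $\xi$-integration to $\{|\xi_2|<\tfrac12|\xi_1|\}\cap C_1^1$ yields $|\langle\xi;\xi\rangle|\geq \tfrac34\xi_1^2$, and the resulting tail integral reduces to exactly the same divergent integral $\int_R^\infty \xi_1^{2(-1+\alpha)+\tfrac{4}{p'}-2}\,d\xi_1=\infty$ appearing in Proposition~\ref{example2}, which holds whenever $\alpha>\tfrac32-\tfrac{2}{p'}$. Choosing $\alpha$ in the nonempty interval $\max\bigl(0,\tfrac32-\tfrac{2}{p'}\bigr)<\alpha<\tfrac12$ then gives $\|\widehat{f_\alpha}(\pi_1^1)K_{1,1}^{1/p'}\|_{S_2}=\infty$, hence $\|\F_p(f_\alpha)\pi_1^1\|_{S_{p'}}=\infty$ by Schatten inclusion, proving the proposition.

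The step I expect to be the main obstacle is verifying the hyperbolic change of variables $y=a\Lambda_\v x$ cleanly: unlike the rotational case, the group $\{\Lambda_\v\}$ is noncompact and the orbit parametrization degenerates as one approaches the lightcone $|x_1|=|x_2|$, so Jacobian computations must be tracked with care. Working strictly inside the open cone $C_1^1$ and keeping the supports bounded away from the lightcone (by the restriction $|\xi_2|<\tfrac12|\xi_1|$) makes the substitution a smooth diffeomorphism, after which the remaining estimates are elementary calculus mirroring the similitude argument. Because the Minkowski--Fourier transform has the same absolute values as the Euclidean Fourier transform, the separable lower bound on $\F_1 f_\alpha$ transfers verbatim, so no essentially new Fourier estimate is required.
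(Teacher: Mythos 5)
Your proposal is correct and follows the same overall architecture as the paper's argument: compute the kernel of $\widehat{f}(\pi_i^j)K_{i,j}^{1/p'}$ via the substitution $y=a\Lambda_{-\v}x$ (Jacobian $dy=a|\langle x;x\rangle|\,da\,d\v$), express the Hilbert--Schmidt norm as an integral of $|(\F_1f)(\xi,a,\v)|^2$ against a weight involving $|\langle\xi;\xi\rangle|^{\frac{2}{p'}-1}$, take a test function with the separable singularity $|b_1|^{-\alpha}|b_2|^{-\alpha}$ in the translation variable, use the uniform lower bound on $\int_0^{|\xi_j|p}t^{-\alpha}\cos t\,dt$, and pass from $S_2$-divergence to $S_{p'}$-divergence via $S_{p'}\subset S_2$ for $p'<2$. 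The one genuinely different step is the treatment of the Minkowski form's degeneracy near the light cone: you restrict the frequency integration to the sector $\{|\xi_2|<\tfrac12\xi_1\}\subset C_1^1$, where $|\langle\xi;\xi\rangle|\ge\tfrac34\xi_1^2$, which reduces the tail integral verbatim to the similitude-group computation and gives divergence for $\alpha>\tfrac32-\tfrac{2}{p'}$; the paper instead integrates over $\{R<x_2<x_1-1\}$ and uses the much weaker bound $x_1^2-x_2^2\ge 2x_2$, arriving at the threshold $\alpha\ge\tfrac34-\tfrac{1}{2p'}$. Both admissible intervals are nonempty precisely when $1<p'<2$, so both close the argument, but your sector restriction is cleaner and makes the parallel with Section \ref{s3} transparent. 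Two further points in your favour: your test function $\phi_\alpha(b)a^3\chi_Q$ has manifestly nonzero Haar integral, whereas the paper's choice $\phi_\alpha(b)a^5\v$ with $\v\in[-p,p]$ integrates to zero in $\v$ and so does not actually satisfy the hypothesis $\int_{\P}f\,d\mu\neq 0$ as written; and your Plancherel weight $a^{\frac{4}{p'}-5}$ is the correct exponent (the paper's $a^{\frac{2}{p'}-6}$ loses a factor of $2$ when converting $\bigl(\langle y;y\rangle/\langle x;x\rangle\bigr)^{1+\frac{2}{p'}}$ into a power of $a$), though neither discrepancy affects the conclusion since $a$ ranges over a bounded set.
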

The answer to this question is yes. We will certainly find a function $f_{\alpha}, 0<\alpha<\frac{1}{2}$, at the end of the section, which is needed for Proposition \ref{example3}.
Let $f\in L^2(\P)$. For $i,j=1,2$,  consider the operators 
    \begin{align*}
        \left(\widehat{f}(\pi_i^j)K_{i,j}^{\frac{1}{p^{\prime}}}\phi\right)(x)&=\int_{\P}f(b,a,\v)\left(\pi_i^j(b,a,\v)K_{i,j}^{\frac{1}{p^{\prime}}}\phi\right)(x)\dfrac{dbdad\v}{a^3}\\
        &=\frac{1}{2\pi}\int_{\P}f(b,a,\v)ae^{i\langle x;b\rangle}|\langle a\Lambda_{-\v}x;a\Lambda_{-\v}x|^{\frac{1}{p^{\prime}}}\phi(a\Lambda_{-\v}x)\dfrac{dbdad\v}{a^3}\\
        &= \int_0^{\infty}\int_{\R}a^{1+\frac{2}{p^{\prime}}}(\F_1f)(x,a,\v)|\langle x;x|^{\frac{1}{p^{\prime}}}\phi(a\Lambda_{-\v}x)\dfrac{dad\v}{a^3},
    \end{align*}
    where $\phi\in L^2(C_i^j)$.
    Substitute $a\Lambda_{-\v}x=y$, then $a=\Bigg(\dfrac{\langle y;y\rangle}{\langle x;x\rangle}\Bigg)^{\frac{1}{2}}$,   $\v=\cosh^{-1}\Bigg(\dfrac{\langle x;y\rangle}{ \langle x;x\rangle}\times \Bigg(\dfrac{\langle x;x\rangle}{\langle y;y\rangle}\Bigg)^{\frac{1}{2}}\Bigg)$ and $dy=a|\langle x;x\rangle|dad\v.$

    Hence the integral representations become
 	$$ \left(\widehat{f}(\pi_i^j)K_{i,j}^{\frac{1}{p^{\prime}}}\phi\right)(x)=\int_{C_{i}^j} K_{i,j}^f(x,y)\phi(y)dy,$$
 	where
 	\begin{equation}
 	 K_{i,j}^f(x,y)=(\mathcal{F}_1f)\Bigg(x,\Bigg(\dfrac{\langle y;y\rangle}{\langle x;x\rangle}\Bigg)^{\frac{1}{2}}, \cosh^{-1}\Bigg(\dfrac{\langle x;y\rangle}{ \langle x;x\rangle}\times \Bigg(\dfrac{\langle x;x\rangle}{\langle y;y\rangle}\Bigg)^{\frac{1}{2}}\Bigg)\Bigg(\dfrac{\langle y;y\rangle}{\langle x;x\rangle}\Bigg)^{\frac{1}{2}+\frac{1}{p^{\prime}}}|\langle x;x\rangle|^{\frac{1}{p^{\prime}}}\dfrac{|\langle x;x\rangle|}{|\langle y,y\rangle|^2},
 		\end{equation}
 for all $(x,y)\in C_i^j\times C_i^j$, $i,j=1,2$. 
 	Now using the Plancherel formula for Minkowski-Fourier transform $\F$, we get
 	\begin{align}\label{int}
&\|\F_p(f)\pi_1^1\|_{S_2}^2+\|\F_p(f)\pi_1^2\|_{S_2}^2+\|\F_p(f)\pi_2^1\|_{S_2}^2+\|\F_p(f)\pi_2^2\|_{S_2}^2\nonumber\\	&=\sum_{i,j=1}^2\int_{C_{i}^j}\int_{C_{i}^j}|K_{i,j}^f(x,y)|^2dxdy\nonumber\\
 		&=\sum_{i,j=1}^2\int_{C_{i}^j}\int_{C_{i}^j}\left|{(\mathcal{F}_1f)\Bigg(x,\Bigg(\dfrac{\langle y;y\rangle}{\langle x;x\rangle}\Bigg)^{\frac{1}{2}}, \cosh^{-1}\Bigg(\dfrac{\langle x;y\rangle}{ \langle x;x\rangle}\times \Bigg(\dfrac{\langle x;x\rangle}{\langle y;y\rangle}\Bigg)^{\frac{1}{2}}\Bigg)}\right|^2\Bigg(\dfrac{\langle y;y\rangle}{\langle x;x\rangle}\Bigg)^{1+\frac{2}{p^{\prime}}}\nonumber\\
   &\times|\langle x;x\rangle|^{\frac{2}{p^{\prime}}}\dfrac{|\langle x;x\rangle|^2}{|\langle y,y\rangle|^4}dxdy\nonumber\\
 &=\sum_{i,j=1}^2\int_{C_{i}^j}\int_0^{\infty}\int_{\R}\left| (\mathcal{F}_1f)(x,a,\v)\right|^2 a^{1+\frac{2}{p^{\prime}}}|\langle x;x\rangle|^{\frac{2}{p^{\prime}}}\dfrac{|\langle x;x\rangle|^2}{|\langle y,y\rangle|^4}a|\langle x;x\rangle|dad\v dx\nonumber\\
 &=\sum_{i,j=1}^2\int_{C_{i}^j}\int_0^{\infty}\int_{\R}\left| (\mathcal{F}_1f)(x,a,\v)\right|^2 a^{\frac{2}{p^{\prime}}-6}|\langle x;x\rangle|^{\frac{2}{p^{\prime}}-1}dad\v dx\nonumber\\
 		&=\int_{\R^2}\int_0^{\infty}\int_{\R}\left| (\mathcal{F}_1f)(x,a,\v)\right|^2 a^{\frac{2}{p^{\prime}}-6}|\langle x;x\rangle|^{\frac{2}{p^{\prime}}-1}dxdad\v.
   \end{align}
Let us consider a function on $\R^2$, with $0<\alpha<\frac{1}{2}$,
$$
\phi_{\alpha}(b)=
\begin{cases}
  |b_1|^{-\alpha}|b_2|^{-\alpha}, \quad b\in Q,\\
  0, \quad\quad \text{otherwise},
\end{cases}
$$
 where $Q=\{b\in\R^2:-p\leq b_j\leq p, j=1,2\}$. 
 Then the Minkowski-Fourier transform of $\phi_{\alpha}$ becomes
 $$(\F\phi_{\alpha})(x)=\dfrac{1}{2\pi}\int_{-p}^{p}e^{b_1x_1}|b_1|^{-\alpha}db_1\int_{-p}^{p}e^{-b_2x_2}|b_2|^{-\alpha}db_2.$$ Now for $x_2>0,0<\alpha<\dfrac{1}{2}$, \begin{equation}\label{int1}
     \int_{-p}^{p}e^{-b_2x_2}|b_2|^{-\alpha}db_2=2\left(\int\limits_0^{px_2}t^{-\alpha}\cos tdt\right)x_2^{-1+\alpha}\geq 2Ax_2^{-1+\alpha},
 \end{equation} and for $x_1>0, 0<\alpha<\dfrac{1}{2}$, \begin{equation}\label{int2}
     \int_{-p}^{p}e^{-b_1x_1}|b_1|^{-\alpha}db_1=2\left(\int\limits_0^{px_1}t^{-\alpha}\cos tdt\right)x_1^{-1+\alpha}\geq 2Ax_1^{-1+\alpha}.
 \end{equation}
 Let us choose a function $f$ on $\P$, with $0<\alpha<\frac{1}{2}$, $$
 f_{\alpha}(b,a,\v)=
 \begin{cases}
     \phi_{\alpha}(b)a^{5}\v,\quad b\in Q, 0<a<p,-p\leq \v\leq p,\\
 0, \quad\quad \text{otherwise}.
 \end{cases}
 $$
 Now using equations \eqref{int1} and \eqref{int2}, the equation \eqref{int} becomes
 \begin{align}\label{iint}
&\|\F_p(f_{\alpha})\pi_1^1\|_{S_2}^2+\|\F_p(f_{\alpha})\pi_1^2\|_{S_2}^2+\|\F_p(f_{\alpha})\pi_2^1\|_{S_2}^2+\|\F_p(f_{\alpha})\pi_2^2\|_{S_2}^2\nonumber\\
&\geq 16A^4\int_R^{\infty}\int_R^{\infty}\int_0^p\int_{-p}^p x_1^{2(-1+\alpha)}x_2^{2(-1+\alpha)}a^{\frac{2}{p^{\prime}}-1}\v|\langle x;x\rangle|^{\frac{2}{p^{\prime}}-1}dxdad\v.
 \end{align}
 Let $A=\{(x_1,x_2): R<x_1,x_2<\infty\}$ and $B=\{(x_1,x_2)\in A:x_2<x_1-1\}$. It is easy to check that \begin{align*}x_1^2-x_2^2=x_1^2+x_2^2-2x_2^2
 \geq 2x_1x_2-2x_2^2=2x_2(x_1-x_2)\geq 2x_2, 
 \end{align*}
 on $B$.
 Consider the left side of equation \eqref{iint}, we have
 \begin{align*}
  \int_R^{\infty}\int_R^{\infty} x_1^{2(-1+\alpha)}x_2^{2(-1+\alpha)}|\langle x;x\rangle|^{\frac{2}{p^{\prime}}-1}dx & \geq  \iint_B x_1^{2(-1+\alpha)}x_2^{2(-1+\alpha)}(2x_2)^{\frac{2}{p^{\prime}}-1}dx \\
  &=\int_{x_1=R}^{\infty}\int_{x_2=R}^{x_2=x_1-1}x_1^{2(-1+\alpha)}x_2^{2(-1+\alpha)}(2x_2)^{\frac{2}{p^{\prime}}-1}dx_1dx_2\\
   &= 2^{\frac{2}{p^{\prime}}-1}\int_{x_1=R}^{\infty}\int_{x_2=R}^{x_2=x_1-1}x_1^{2(-1+\alpha)}x_2^{2(-1+\alpha)+\frac{2}{p^{\prime}}-1}dx_1dx_2\\
   &= K\int_{x_1=R}^{\infty}x_1^{2(-1+\alpha)}(x_1-1)^{2(-1+\alpha)+\frac{2}{p^{\prime}}}dx_1\\
   &-KR^{2(-1+\alpha)+\frac{2}{p^{\prime}}}\int_{R}^{\infty}x_1^{2(-1+\alpha)}dx_1,
 \end{align*}
 where $K=\dfrac{1}{2(-1+\alpha)+\frac{2}{p^{\prime}}}$. The integral $$\int_{x_1=R}^{\infty}x_1^{2(-1+\alpha)}(x_1-1)^{2(-1+\alpha)+\frac{2}{p^{\prime}}}dx_1=\infty,$$ when $\frac{1}{2}>\alpha\geq\frac{3}{4}-\frac{1}{2p^{\prime}}>0, 1<p^{\prime}<2$, and the integral  $\int\limits_{R}^{\infty}x_1^{2(-1+\alpha)}dx_1<\infty, $ when $\alpha<\frac{1}{2}$. Then the left-hand side of equation \eqref{iint} becomes infinity, and hence  $$\|\F_p(f_{\alpha})\pi_1^1\|_{S_2}^2+\|\F_p(f_{\alpha})\pi_1^2\|_{S_2}^2+\|\F_p(f_{\alpha})\pi_2^1\|_{S_2}^2+\|\F_p(f_{\alpha})\pi_2^2\|_{S_2}^2=\infty,$$ when $\frac{1}{2}>\alpha\geq\frac{3}{4}-\frac{1}{2p^{\prime}}>0, 1<p^{\prime}<2$. Thus for these values of $\alpha$,
 $$\|\F_p(f_{\alpha})\pi_{1}^1\|_{S_{p^{\prime}}}^{p^{\prime}}+\|\F_p(f_{\alpha})\pi_{2}^2\|_{S_{p^{\prime}}}^{p^{\prime}}+\|\F_p(f_{\alpha})\pi_{2}^1\|_{S_{p^{\prime}}}^{p^{\prime}}+\|\F_p(f_{\alpha})\pi_{1}^2\|_{S_{p^{\prime}}}^{p^{\prime}}=\infty.$$

\section*{acknowledgment} The author is thankful to Prof. Aparajita Dasgupta for many fruitful discussions on the problem.


\begin{thebibliography}{amsplain}
		
		
		\normalsize
		\baselineskip=17pt

       \bibitem{Boggiatto1} Boggiatto, B. Buzano, E. and Rodino, L. \emph{Global ellipticity and spectral theory}. Mathematical Research, 92(1996).
       
     \bibitem{Boggiatto2} Boggiatto, P. and Rodino, L. ``Quantization and pseudo-differential operators." \emph{Cubo Mathematical Eductional}, \emph{1}(5), (2003).
       
		
	
		\bibitem{Chen} Chen, L. and Zhao, J. ``Weyl transform and generalized spectrogram associated with quaternion Heisenberg group." \emph{Bulletin des Sciences Mathématiques}, \emph{136}(2), 127--143(2012).
		
		


\bibitem{San1}  Dasgupta, A. Nayak, S. K., and Wong M. W. ``Hilbert-Schmidt and Trace Class Pseudo-Differential Operators and Weyl Transforms on the Affine Group." \emph{Journal of Pseudo-differential Operators}, \emph{12}(11), (2021).

\bibitem{San2} Dasgupta, A. and  Nayak, S. K. ``Pseudo-Differential Operators, Wigner Transform and Weyl Transforms on Similitude Group, $\mathbb{SIM}(2)$",  \emph{Bulletin des sciences mathématiques}, Volume 174, 103087, (2022).

\bibitem{San3}Dasgupta, A. and  Nayak, S. K.  ``Pseudo-Differential Operators, Wigner Transform and Weyl Transform on the Affine Poincar\'e Group." \emph{Bulletin des sciences mathématiques}, Volume 184, 103255, (2023).

\bibitem{San4} Dasgupta, A. and Nayak, S. K. ``Localization Operator and Weyl Transform on Reduced Heisenberg Group with Multi-dimensional Center." 2022. http://arxiv.org/abs/2211.06613 

  
		
		
		
		

\bibitem{Fuhr} F\"uhr, H. \emph{Abstract harmonic analysis of continuous wavelet transforms}. vol. 1963. Springer
Science \& Business Media, (2005).
  

\bibitem{Ghosh} Ghosh, S. and Srivastav, R. K. ``{Unbounded Weyl transforms on the Euclidean motion group and Heisenberg motion group}." DOI: arXiv:2106.15704.
  

  
		\bibitem{Kohn} Kohn, J. J. and Nirenberg, L. ``An algebra of pseudo-differential operators." \emph{Communications on Pure and Applied mathematics}, \emph{18}(1), 269--305(1965).
		
		
		
		\bibitem{Peng} Peng, L. and Zhao, J. `` Weyl transforms associated with the Heisenberg group." \emph{Bulletin des Sciences Mathématiques}, \emph{132}, 78--86(2008).
		
		\bibitem{Peng1} Peng, L. and Zhao, J. ``Weyl transforms on the upper half plane." \emph{Revista Matemática Complutense}, \emph{23}, 77--95(2010).
		
		
		\bibitem{Rachdi} Rachdi, L. T. and Trim\'eche, K. ``Weyl transforms associated with the spherical mean operator." \emph{Analysis and Applications}, \emph{1}(2), 141--164(2003).
		
		\bibitem{Simon} Simon, B. `` The Weyl transforms and $L^p$ functions on phase space." \emph{Proceedings of the American Mathematical Society}, \emph{116}(4), 1045--1047(1992).
		
		\bibitem{stien} Stien, E. M. and Murphy, T. S. \emph{Harmonic Analysis: Real-Variable Methods, Orthogonality, and Oscillatory Integrals}. Princeton University Press, New Jersey(1993).
		
      \bibitem{Toft1} Toft, J. ``Continuity properties for modulation spaces with applications to pseudo-differential calculus I." \emph{J. Funct. Anal.},\emph{207}(2), 399--429(2004).
      

      \bibitem{Toft2} Toft, J. ``Continuity properties for modulation spaces with applications to pseudo-differential calculus II." \emph{Ann. Global Anal. Geom.},\emph{26}, 73--106(2004).
      
  
		\bibitem{Weyl} Weyl, H. \emph{The theory of groups and quantum mechanics}. Dover(1950).
		
		\bibitem{wong1} Wong, M. W. \emph{The Weyl Transform}. Springer(1998).
		
		
		\bibitem{Zhao} Zhao, J. M. and Peng, L. Z. ``Wavelet and Weyl transform associated with the spherical mean operator." \emph{Integral Equation and Operator Theory}, \emph{50}, 279--290(2004).		
		
	\end{thebibliography}
\end{document}